\newcommand{\Z}{\mathbb{Z}}
\newcommand{\C}{\mathbb{C}}
\newcommand{\scr}[1]{\EuScript{#1}}
\newcommand{\0}{\mathbf{0}}
\newcommand{\TkC}{T_k\langle\scr{C}\rangle}
\newcommand{\WkC}{W_k\langle\scr{C}\rangle}
\author{Nick Ramsey}
\address{Department of Mathematics, DePaul University}
\email{nramsey@depaul.edu}
\title[]{Perturbing Subshifts of Finite Type: Two Words}
\begin{abstract}
	We bound the change in entropy incurred by an irreducible subshift of finite type upon perturbing it by forbidding a pair of admissible words.  Lind has proven such bounds in the one-word case, and we adapt his methods.  In particular, we introduce multi-word correlation polynomials and study their size, as well as that of their determinant in the two-word case.
\end{abstract}
\begin{document}
\frontmatter
\maketitle
\tableofcontents

\theoremstyle{plain}

\section{Introduction}
Let $\scr{A} = \{R_1,\dots, R_r\}$ be an alphabet of $r$ symbols and let $T$ be an $r\times r$ matrix with entries in $\{0,1\}$.  These data determine a subshift of finite type as the collection $\Sigma_T$ of all bi-infinite strings $(s_i)$ in the alphabet $\scr{A}$ such that the $(s_{i+1},s_i)$ entry in $T$ is equal to $1$ for all $i\in\Z$.  Alternatively, let $G_T$ denote the directed graph with vertex set $\scr{A}$ such that there is an edge from $R_i$ to $R_j$ if and only if  the $(j,i)$-entry in $T$ is equal to $1$.  Then $\Sigma_T$ can be interpreted as the set of bi-infinite walks on $G_T$.  We let $\sigma:\Sigma_T\to \Sigma_T$ denote the shift map $\sigma((s_i)) = (s_{i+1})$.  The goal of this note is to explore the entropy of $(\Sigma_T,\sigma)$ and how it is affected by perturbations obtained by forbidding various words from $\Sigma_T$.  

Let $k$ be a positive integer.  By an {\it admissible word of length $k$} (or an {\it admissible $k$-word}) in the alphabet $\scr{A}$, we shall mean one of the form $w = a_1\cdots a_k$ such that $T_{a_{i+1},a_i}=1$ for all $i=1,\dots,k-1$.   We denote by $V_k$ the vector space spanned by such words, and will denote the basis vector associated to the word above by $[w]=[a_1\cdots a_k]$.  Define an injective linear transformation $$\psi_k:V_1\longrightarrow V_k$$ by setting $\psi_k([a])$ equal to the sum of all admissible $k$-words beginning with $a$.  The matrix $T$ defines a linear transformation $T:V_1\to V_1$ given by $$T([a]) = \sum_{T_{b,a}=1} [b].$$  

Given a word $w$, we denote by $\beta w$ and $\eta w$ the words obtained by deleting the last and first symbol of $w$, respectively.  
Let $T_k:V_k\longrightarrow V_k$ denote the linear transformation defined by setting $$T_k([a_1a_2\cdots a_k]) = \sum_{\beta w = a_2\cdots a_k} [w] = \sum [a_2\cdots a_k*].$$  
\begin{lemm}
	The map $\psi_k$ intertwines the maps $T$ and $T_k$ in the sense that $\psi_k\circ T = T_k\circ \psi_k$.
\end{lemm}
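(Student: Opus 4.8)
The plan is to check the operator identity $\psi_k\circ T = T_k\circ\psi_k$ on the basis $\{[a] : a\in\scr{A}\}$ of $V_1$; since every map in sight is linear, this suffices. At bottom the statement is a bookkeeping fact about walks on $G_T$, so the whole proof amounts to writing out both sides as sums of basis vectors $[w]$ and comparing coefficients. Fix a symbol $a\in\scr{A}$ for the remainder.

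First I would unwind the left-hand side. By the definition of $T$ and linearity of $\psi_k$,
\[
\psi_k(T[a]) \;=\; \sum_{T_{b,a}=1}\psi_k([b]) \;=\; \sum_{T_{b,a}=1}\;\sum_{\substack{w=b_1\cdots b_k \text{ admissible}\\ b_1=b}} [w].
\]
Reading off the coefficient of a fixed admissible $k$-word $w=w_1\cdots w_k$, I see that $[w]$ occurs with coefficient $1$ if $T_{w_1,a}=1$ and with coefficient $0$ otherwise, the point being that the outer index $b$ is forced to equal $w_1$.

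Next I would unwind the right-hand side. Writing $\psi_k([a]) = \sum_u [u]$ with $u$ ranging over admissible $k$-words beginning with $a$, and applying $T_k$ term by term,
\[
T_k(\psi_k[a]) \;=\; \sum_{\substack{u \text{ admissible}\\ u_1=a}}\;\sum_{\beta w=\eta u} [w],
\]
where in the inner sum $w$ ranges over admissible $k$-words whose length-$(k-1)$ prefix $\beta w$ equals the length-$(k-1)$ suffix $\eta u$ of $u$. Collecting terms, the coefficient of a fixed admissible $k$-word $w=w_1\cdots w_k$ here equals the number of admissible $k$-words $u$ with $u_1=a$ and $\eta u=\beta w=w_1\cdots w_{k-1}$.

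The heart of the argument is to evaluate this count. The condition $\eta u=w_1\cdots w_{k-1}$ pins down the last $k-1$ symbols of $u$, and $u_1=a$ pins down the first, so there is at most one candidate, namely $u=a\,w_1\cdots w_{k-1}$; and this candidate is genuinely an admissible $k$-word precisely when $T_{w_1,a}=1$, because the remaining adjacency conditions $T_{w_{i+1},w_i}=1$ are already guaranteed by admissibility of $w$. Hence the coefficient of $[w]$ on the right-hand side is again $1$ if $T_{w_1,a}=1$ and $0$ otherwise, matching the left-hand side, and the lemma follows. The only place that demands care is exactly this matching of admissibility constraints: one must check that prepending the symbol $a$ neither drops an adjacency condition already recorded in $w$ nor imposes a spurious new one beyond $T_{w_1,a}=1$. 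Everything else is formal.
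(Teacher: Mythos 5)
Your proof is correct and is essentially the same argument as the paper's, which simply states in one line that evaluating either side at $[a]$ yields the sum of all admissible $k$-words whose first symbol $b$ satisfies $T_{b,a}=1$. You have expanded that one-liner into a coefficient-by-coefficient verification, which is a perfectly valid (if more verbose) rendering of the same idea.
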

\begin{proof}
	Evaluating either side at $[a]$ results in the sum of all admissible $k$-words whose first symbol $b$ evolves from $a$ ({\it i.e.} $T_{b,a}=1$).  
\end{proof}
In particular, $\psi_k(V_1)$ is $T_k$-invariant and the characteristic polynomial of $T_k$ on $\psi_k(V_1)$ coincides with that of $T$ on $V_1$.  The next lemma shows that this is the extent of the ``interesting part'' of the characteristic polynomial of $T_k$.
\begin{lemm}\label{nilpotent}
	The linear transformation $T_k$ is nilpotent on $V_k/\psi_k(V_1)$. 
\end{lemm}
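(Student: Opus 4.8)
The plan is to prove the apparently sharper statement that $T_k^{\,k-1}$ maps all of $V_k$ into $\psi_k(V_1)$. The previous lemma gives $T_k\circ\psi_k=\psi_k\circ T$, so $\psi_k(V_1)$ is $T_k$-stable and $T_k$ descends to an endomorphism $\overline{T_k}$ of $V_k/\psi_k(V_1)$; the inclusion $T_k^{\,k-1}(V_k)\subseteq\psi_k(V_1)$ then says precisely that $\overline{T_k}^{\,k-1}=0$, which is the asserted nilpotency. (When $k=1$ everything is trivial, since $\psi_1=\mathrm{id}$ and the quotient is zero.)

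To prove the inclusion I would compute $T_k^{\,j}$ on basis vectors, showing by induction on $j$ that for $0\le j\le k-1$ and every admissible $k$-word $a_1\cdots a_k$,
\[
T_k^{\,j}\bigl([a_1a_2\cdots a_k]\bigr)=\sum\,[a_{j+1}a_{j+2}\cdots a_k\,*\cdots*],
\]
the sum ranging over all admissible $k$-words whose initial segment of length $k-j$ equals $a_{j+1}\cdots a_k$ (for $j=0$ this reads $T_k^{0}[a_1\cdots a_k]=[a_1\cdots a_k]$). For the inductive step one applies the definition of $T_k$ to each term $[a_{j+1}\cdots a_k b_1\cdots b_j]$ occurring in the $j$-th sum, obtaining the sum of $[a_{j+2}\cdots a_k b_1\cdots b_j c]$ over symbols $c$ that may legally follow that term's last letter, and one observes that as the term and then $c$ vary over all admissible choices the resulting words run over exactly the admissible $k$-words with initial segment $a_{j+2}\cdots a_k$, each exactly once. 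Setting $j=k-1$, the initial segment has length one, so $T_k^{\,k-1}([a_1\cdots a_k])=\sum[a_k\,*\cdots*]$ is the sum of all admissible $k$-words beginning with $a_k$, which is $\psi_k([a_k])\in\psi_k(V_1)$; this proves the claim and hence the lemma.

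The only step requiring genuine care is the bookkeeping in the inductive step, namely checking that every admissible completion of $a_{j+2}\cdots a_k$ arises, via the operation ``delete the first symbol and append a legal last symbol'', from exactly one admissible completion of $a_{j+1}\cdots a_k$; this uses only that admissibility is a condition on consecutive pairs and that $T_{a_{j+2},a_{j+1}}=1$, which holds because $a_1\cdots a_k$ is admissible. If one prefers to avoid the induction, the same computation is encoded by the factorization $T_k^{\,j}=\psi_{k,\,k-j}\circ\rho_j$, where $\rho_j\colon V_k\to V_{k-j}$ is left-truncation $[a_1\cdots a_k]\mapsto[a_{j+1}\cdots a_k]$ and $\psi_{k,m}\colon V_m\to V_k$ sends an admissible $m$-word to the sum of all admissible $k$-words beginning with it (so $\psi_{k,1}=\psi_k$); the case $j=k-1$ then gives $T_k^{\,k-1}=\psi_k\circ\rho_{k-1}$, which visibly has image in $\psi_k(V_1)$.
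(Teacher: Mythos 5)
Your proof is correct and takes essentially the same route as the paper: the paper's entire proof is the one-line observation that $T_k^{\,k-1}[a_1\cdots a_k]=\psi_k([a_k])$, which is exactly the identity you establish (and then correctly upgrade to the nilpotency statement). You merely supply the bookkeeping details (the induction, or equivalently the factorization $T_k^{\,j}=\psi_{k,\,k-j}\circ\rho_j$) that the paper leaves to the reader.
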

\begin{proof}
	One simply notes that $(T_k)^{k-1}[a_1\cdots a_k] = \psi_k([a_k])$.
\end{proof}
\begin{lemm}\label{absorbing}
	Let $W$ be a $T_k$-invariant subspace with $\psi_k(V_1)\subseteq W\subseteq V_k$ and let $w$ be an admissible $k$-word.  There exists a positive integer $d$ with $T_k^d[w]\in W$, and if $d$ is the smallest such integer, then the vectors $[w], T_k[w], \dots, T_k^{d-1}[w]$ are linearly independent modulo $W$.
\end{lemm}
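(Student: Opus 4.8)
\emph{Plan.} The first assertion is essentially free, and the second is the classical linear algebra of a single Jordan string; the only work is organizing the reduction to a quotient.

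The existence of $d$ is immediate from Lemma~\ref{nilpotent} together with the inclusion $\psi_k(V_1)\subseteq W$: the computation in that proof gives $T_k^{\,k-1}[w]=\psi_k([a_k])\in\psi_k(V_1)\subseteq W$, and since $W$ is $T_k$-invariant it follows that $T_k^{\,d}[w]\in W$ for all sufficiently large $d$, in particular for some positive $d$. Hence the set of positive integers $d$ with $T_k^{\,d}[w]\in W$ is nonempty and has a least element. (If $[w]\in W$ the independence assertion is degenerate, so I take $[w]\notin W$, in which case this least $d$ is genuinely the first power of $T_k$ carrying $[w]$ into $W$.)

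For the linear independence I would pass to the quotient $U:=V_k/W$. Since $W$ is $T_k$-invariant, $T_k$ induces an operator $\overline{T}$ on $U$; write $\pi:V_k\to U$ for the projection and set $u:=\pi([w])$. Minimality of $d$ says exactly that $\overline{T}^{\,d}u=0$ while $u,\overline{T}u,\dots,\overline{T}^{\,d-1}u$ are each nonzero. Because applying $\pi$ converts any linear relation among $[w],T_k[w],\dots,T_k^{\,d-1}[w]$ modulo $W$ into a linear relation among $u,\overline{T}u,\dots,\overline{T}^{\,d-1}u$ in $U$, it suffices to show the latter vectors are linearly independent. This is the standard Jordan-string argument: if $\sum_{i=0}^{d-1}c_i\overline{T}^{\,i}u=0$ with the $c_i$ not all zero, let $j$ be the least index with $c_j\ne 0$ and apply $\overline{T}^{\,d-1-j}$; every term with $i>j$ then carries exponent $\ge d$ and vanishes, leaving $c_j\overline{T}^{\,d-1}u=0$ and hence $\overline{T}^{\,d-1}u=0$, which contradicts the minimality of $d$.

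I do not anticipate a genuine obstacle here: the heart of the matter is the familiar fact that the iterates $u,\overline{T}u,\dots,\overline{T}^{\,d-1}u$ of a vector $u$ satisfying $\overline{T}^{\,d}u=0\neq\overline{T}^{\,d-1}u$ are linearly independent. The two points that warrant a moment's care are verifying that $T_k$ acts nilpotently on the quotient $V_k/W$ — which follows from Lemma~\ref{nilpotent} via $\psi_k(V_1)\subseteq W$, since then $T_k^{\,k-1}$ annihilates $V_k/W$ — and keeping track of how the minimality of $d$ enters the contradiction, namely through the nonvanishing of $\overline{T}^{\,d-1}u$.
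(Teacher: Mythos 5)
Your proof is correct and takes essentially the same approach as the paper: locate the least index with nonzero coefficient, apply a power of $T_k$ to push that term to $T_k^{d-1}[w]$ modulo $W$, and contradict minimality of $d$. You phrase it cleanly in the quotient $V_k/W$ with the correct exponent $\overline{T}^{\,d-1-j}$; the paper argues directly in $V_k$ and writes $T_k^{d-i}$ (yielding $\alpha_i T_k^d[w]\in W$, which is not actually a contradiction), an apparent off-by-one slip that your version avoids.
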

\begin{proof}
	The existence of $d$ follows from Lemma \ref{nilpotent}.  Suppose that 
	\begin{equation}\label{abslincomb}
	\alpha_0[w]+\alpha_1T_k[w]+\cdots+\alpha_{d-1}T_k^{d-1}[w] \in W
	\end{equation}
	 and let $i$ be the smallest index with $\alpha_i\neq 0$.    Applying $T_k^{d-i}$ to this implies $$\alpha_iT_k^d[w]\in W$$ which contradicts the minimality of $d$.  Thus (\ref{abslincomb}) implies that $\alpha_0=\alpha_1=\cdots=\alpha_{d-1}=0$.
\end{proof}

Consider an admissible $(k+1)$-word $w = a_0a_1\cdots a_k$ that we wish to forbid from occurring in $\Sigma_T$.  This condition is easy to specify from the point of view of $V_k$ and $T_k$, namely, we forbid the shift transition from the initial $k$-word $\beta w = a_0a_1\cdots a_{k-1}$ to the final $k$-work $\eta w = a_1a_2\cdots a_k$.  Since $w$ was assumed to be admissible, the matrix of $T_k$ with respect to the standard basis of $V_k$ has a $1$ in the $(\eta w,\beta w)$ entry, which we must switch to a $0$.  Let $E_w$ denote the endomorphism of $V_k$ whose matrix with respect to the standard basis of admissible $k$-words consists of all $0$s except a $1$ in the $(\eta w,\beta w)$ position.  Now, given a collection $\scr{C}$ of admissible $(k+1)$-words, the matrix of the subshift obtained from $\Sigma_T$ by forbidding words in $\scr{C}$ is $$T_k\langle\scr{C}\rangle = T_k - \sum_{w\in\scr{C}}E_w.$$  

To study the entropy of this subshift, we must compute the characteristic polynomial of  this matrix.  The space $\psi_k(V_1)$ is no longer invariant under $T_k\langle\scr{C}\rangle$, but we can enlarge it slightly so as to obtain an invariant subspace modulo which $\TkC$ is nilpotent.  Since the image of $E_w$ is spanned by $[\eta w]$, the natural space to consider is the minimal $T_k$-invariant subspace of $V_k$ containing $\psi(V_1)$ and the $[\eta w]$ for $w\in\scr{C}$.  This space, which we will denote by $W_k\langle\scr{C}\rangle$, is simply the span of $\psi_k(V_1)$ and the vectors $T_k^i[\eta w]$ for $i\geq 0$ and $w\in\scr{C}$.  
\begin{lemm}
	The transformation $\TkC$ leaves the space $W_k\langle\scr{C}\rangle$  invariant and is nilpotent on $V_k/W_k\langle\scr{C}\rangle$.
\end{lemm}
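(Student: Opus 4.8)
The plan is to verify the invariance claim directly on a spanning set of $\WkC$, and then to observe that modulo $\WkC$ the perturbation terms vanish, so that nilpotency on the quotient reduces to Lemma \ref{nilpotent}.

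For the invariance, I would start from the description of $\WkC$ as the span of $\psi_k(V_1)$ together with the vectors $T_k^i[\eta w]$ for $i\geq 0$ and $w\in\scr{C}$, and check that $\TkC = T_k - \sum_{w\in\scr{C}}E_w$ sends each of these generators back into $\WkC$. The crucial bookkeeping point is that $E_w v\in\WkC$ for \emph{every} $v\in V_k$: indeed $E_w v$ is a scalar multiple of $[\eta w]=T_k^0[\eta w]$, which is one of the listed generators. So it only remains to see that $T_k$ maps each generator into $\WkC$, and this is immediate: if $v\in\psi_k(V_1)$ then $T_k v\in\psi_k(V_1)\subseteq\WkC$ since $\psi_k(V_1)$ is $T_k$-invariant (the remark following the intertwining lemma, $T_k\psi_k = \psi_k T$), while if $v=T_k^i[\eta w]$ then $T_k v=T_k^{i+1}[\eta w]$ is again one of the generators. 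Putting these together, $\TkC v\in\WkC$ for every generator, hence $\TkC\WkC\subseteq\WkC$. (Equivalently one can phrase this as: $\WkC$ is $T_k$-invariant by construction and is invariant under each rank-one operator $E_w$ because $E_w$ has image in $\WkC$, hence it is invariant under the difference.)

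For the nilpotency on $V_k/\WkC$, I would reuse the same observation: since $E_w v\in\WkC$ for all $v$, the operator $\sum_{w\in\scr{C}}E_w$ induces the zero endomorphism of $V_k/\WkC$, so $\TkC$ and $T_k$ induce the \emph{same} endomorphism of $V_k/\WkC$. Now $\psi_k(V_1)\subseteq\WkC$, and the proof of Lemma \ref{nilpotent} shows that $T_k^{k-1}$ carries all of $V_k$ into $\psi_k(V_1)$, hence into $\WkC$. Therefore the endomorphism induced by $T_k$ — equivalently, the one induced by $\TkC$ — on $V_k/\WkC$ has $(k-1)$st power equal to $0$, which is the claimed nilpotency.

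There is no genuinely hard step here; the only thing to get right is the elementary remark that the rank-one perturbations $E_w$ land inside the enlarged space $\WkC$ by construction, which is precisely the reason $\WkC$ was defined to contain the vectors $[\eta w]$ in the first place. Everything else is a routine reduction to Lemma \ref{nilpotent}.
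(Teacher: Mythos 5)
Your proposal is correct and follows essentially the same route as the paper: the paper's one-line proof ("Since each $E_w$ has image in $\WkC$, this follows immediately from Lemma \ref{nilpotent}") is exactly the observation you elaborate. You have simply spelled out the two consequences — that $\WkC$ absorbs the images of the $E_w$ and is $T_k$-invariant by construction, giving invariance; and that the $E_w$ induce zero on the quotient, reducing nilpotency of $\TkC$ on $V_k/\WkC$ to nilpotency of $T_k$ there, which follows from $T_k^{k-1}V_k\subseteq\psi_k(V_1)\subseteq\WkC$.
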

\begin{proof}
	Since each $E_w$ has image in $W_k\langle\scr{C}\rangle$, this follows immediately from Lemma \ref{nilpotent}.
\end{proof}

The task is now to determine a basis of $W_k\langle\scr{C}\rangle$, express the transformation $T_k\langle\scr{C}\rangle$ in terms of it, and use it to compute the characteristic polynomial of $\TkC$.  That words can be determined by proper subwords will play a role in what follows, and we use the following device to help keep track of this.  For an admissible $k$-word $a_1a_2\cdots a_k$, we denote by $h=h(w)$ the smallest non-negative integer with the property that $a_1\cdots a_{h+1}$ uniquely determines the entire word $a_1a_2\cdots a_k$.  Clearly we have $h\leq k-1$.  
\begin{lemm}\label{risd}
	The integer $h$ is the least positive integer satisfying $T_k^{h}[w]\in \psi_k(V_1)$, and moreover we have $$T_k^{h}[w] = \psi_k([a_{h+1}]).$$
\end{lemm}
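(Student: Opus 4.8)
The plan is to compute the iterates $T_k^i[w]$ explicitly, read off from that computation precisely when an iterate enters $\psi_k(V_1)$, and then match the resulting threshold with the combinatorial integer $h$. First, by induction on $i$ and using only the definition of $T_k$, I would establish that for $0\le i\le k-1$ the vector $T_k^i[w]$ is the sum of all admissible $k$-words $v=v_1\cdots v_k$ whose first $k-i$ symbols are $a_{i+1},a_{i+2},\dots,a_k$ (for $i=k-1$ this recovers $T_k^{k-1}[w]=\psi_k([a_k])$, as in the proof of Lemma~\ref{nilpotent}). In the inductive step one writes $T_k$ as the operation ``delete the first symbol and append an admissible last symbol'' and checks that this is a bijection from the relevant set of $k$-words onto the next one. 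The only point here that is not pure bookkeeping is that the indexing set is nonempty: $a_{i+1}\cdots a_k$ occurs inside the admissible word $w$ and must be completable to a $k$-word, which holds precisely because $\psi_k([a_k])\ne 0$, i.e.\ because $\psi_k$ is injective.

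Next I would observe that $\psi_k(V_1)$ is spanned by the vectors $\psi_k([a])=\sum_{v_1=a}[v]$, the indicator sums of the partition of admissible $k$-words by first symbol; hence a vector of $V_k$ lies in $\psi_k(V_1)$ if and only if its coordinates in the standard basis depend only on the first symbol of the indexing word. Substituting the formula above, $T_k^i[w]\in\psi_k(V_1)$ exactly when every admissible $k$-word beginning with $a_{i+1}$ in fact begins with $a_{i+1}a_{i+2}\cdots a_k$, and in that case the common value is $\psi_k([a_{i+1}])$. Since $\psi_k(V_1)$ is $T_k$-invariant, the set of $i$ for which $T_k^i[w]\in\psi_k(V_1)$ is upward closed, so it is enough to analyse the two indices $i=h$ and $i=h-1$.

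For $i=h$: given any admissible $k$-word $v$ with $v_1=a_{h+1}$, I would glue the prefix $a_1\cdots a_h$ of $w$ onto the front of $v$, overlapping at the symbol $a_{h+1}=v_1$, obtaining an admissible word of length $h+k$; its length-$k$ prefix is an admissible $k$-word beginning with $a_1\cdots a_{h+1}$, hence equals $w$ by the defining property of $h$, which forces $v_2=a_{h+2},\dots,v_{k-h}=a_k$. By the criterion of the previous paragraph this gives $T_k^h[w]=\psi_k([a_{h+1}])$. For $i=h-1$ (when $h\ge 1$): minimality of $h$ provides an admissible $k$-word $w'\ne w$ agreeing with $w$ in its first $h$ symbols; then $w'_h w'_{h+1}\cdots w'_k$ is admissible, begins with $a_h$, and differs from $a_h a_{h+1}\cdots a_k$, and completing it on the right to an admissible $k$-word (again possible by injectivity of $\psi_k$) exhibits an admissible $k$-word that begins with $a_h$ but not with $a_h\cdots a_k$; hence $T_k^{h-1}[w]\notin\psi_k(V_1)$. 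Combined with upward closedness, this identifies $h$ as the least index with $T_k^i[w]\in\psi_k(V_1)$ and proves the displayed identity.

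I expect the gluing step to be the one real obstacle: one has to check carefully that the concatenated word is admissible at the splice point and that its length-$k$ prefix is constrained exactly by the definition of $h$, and symmetrically that the completed word produced in the $i=h-1$ case genuinely violates the criterion. A minor recurring subtlety is the tacit use of the fact that a subword of an admissible word extends to an admissible $k$-word; this is exactly where injectivity of $\psi_k$ (equivalently, the absence of sinks in $G_T$) is needed — without such a hypothesis a word ending at a sink would have all of its positive iterates equal to $0\in\psi_k(V_1)$ and the statement would fail. Everything else is routine index-chasing.
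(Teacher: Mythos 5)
Your proof is correct and follows the same approach as the paper: compute $T_k^i[w]$ explicitly as the sum of admissible $k$-words whose first $k-i$ symbols are $a_{i+1}\cdots a_k$, characterize membership in $\psi_k(V_1)$ by the coefficients depending only on the first symbol, and match the resulting threshold with $h$. You do make explicit a step the paper treats as a mere restatement --- the equivalence between ``every admissible $k$-word beginning with $a_{i+1}$ begins with $a_{i+1}\cdots a_k$'' and ``$a_1\cdots a_{i+1}$ determines $w$'' --- via your gluing (for $i=h$) and right-completion (for $i=h-1$) arguments, and you correctly locate where the standing injectivity of $\psi_k$ (no sinks in $G_T$) is actually used.
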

\begin{proof}
	Suppose that $T^i_k[w] = \psi_k(v)$ with $i\leq k-1$.  All summands in $T^i[w]$ have first symbol equal to $a_{i+1}$, so we must have $v=[a_{i+1}]$.  Comparing both sides of $T^i_k[w]=\psi_k([a_{i+1}])$ shows that a word beginning $a_{i+1}$ must in fact begin $a_{i+1}\cdots a_k$, which is to say that $w$ is determined by $a_1\cdots a_{i+1}$.  The least such integer is $h$, by definition.
\end{proof}

We will require some standard facts from Perron-Frobenius theory, and refer the reader to Section 1.3 of \cite{Kitchens} for proofs.  We suppose throughout that $T$ is irreducible, which is to say that the directed graph $G_T$ is strongly connected.   For such a $T$, there is associated a positive integer $s$ such that the eigenvalues of maximal absolute value are precisely $$\lambda_0, \lambda_0e^{2\pi i/s},\dots, \lambda_0e^{2\pi i (s-1)/s}$$ with $\lambda_0$ a positive real number.  We will refer to these as the {\it dominant eigenvalues} of $T$ and call $\lambda_0$ the {\it Perron-Frobenius eigenvalue}.  Each of the dominant eigenvalues has multiplicity one, so the characteristic polynomial of $T$ satisfies $$\chi_T(t) = (t^s-\lambda_0^s)q(t)$$ with all the roots $\lambda$ of $q(t)$ satisfying $|\lambda|<\lambda_0$.  The perturbations of $T$ we study in this paper needn't be irreducible, but still correspond to non-negative adjacency matrices and as such have a unique positive real eigenvalue (which we still refer to as the {\it Perron-Frobenius} eigenvalue) that dominates all eigenvalues in absolute value.

In Section \ref{results}, we obtain the following bound on the perturbed eigenvalue, as well as some refinements under stronger assumptions on the pair of words in question.
\begin{theo}\label{main}
Suppose that $T$ is irreducible with Perron-Frobenius eigenvalue $\lambda_0>1$.  Given admissible $(k+1)$-words $w_1$ and $w_2$, let $\lambda_1$ denote the Perron-Frobenius eigenvalue of $T_k\langle w_1,w_2\rangle$.  There exists a positive constant $C$ (depending only on $T$) such that $$|\lambda_1-\lambda_0|\leq C\lambda_0^{-k/2}$$
for $k$ sufficiently large.
\end{theo}

\section{One word (Lind)}\label{oneword}

The results and techniques of this section are due to Lind in \cite{Lind}, though we give a self-contained treatment that differs slightly from his in places.
Suppose that $\scr{C}$ consists of the single word $w=a_0a_1\cdots a_k$ and let $d=h(w)$.  By Lemmas \ref{absorbing} and \ref{risd}, the set $$\{\psi_k([a])\ |\ a\in\scr{A}\}\cup \{[\eta w], T_k[\eta w], \dots, T_k^{d-1}[\eta w]\}$$ is a basis of $\WkC$.  In particular, $d$ is the dimension jump from $\psi_k(V_1)$ to $\WkC$, which is why we have given it the name $d$ instead of $h$.  The dimension jumps incurred by forbidding additional words are generally {\it not} given by their associated $h$ value, and our use of the symbol $d$ here is for forward-compatibility to the multi-word situation.

By Lemma \ref{risd}, the matrix of $\TkC$ with respect to this basis is
\begin{equation}
	\left[\begin{array}{c|cccccc}
		T & \0 & \0 & \0 &  \cdots & \0 & \mathbf{e}_{a_{d+1}} \\ \hline 
		\0^T & 0 & 0 & 0 &  \cdots & 0&0 \\ 
		\0^T & 1 & 0 & 0 &  \cdots & 0&0	\\ 
		\0^T & 0 & 1 & 0 &  \cdots & 0 &0\\
		\0^T & 0 & 0 & 1 & \cdots & 0 &  0 \\
		\vdots & \vdots & \vdots & & \ddots & \vdots& \vdots \\
		\0^T & 0 & 0 & & \cdots & 1 & 0
		\end{array}\right]
\end{equation}
Here, for $a\in\scr{A}$, we let $\mathbf{e}_a$ denote the $r$-dimensional column vector with a $1$ in the $a$ position and $0$s elsewhere.  The matrix of $E_w$ with respect to our basis is concentrated in the row corresponding to the basis vector $[\eta w]$ and has a $1$ in every column whose associated basis vector has $[\beta w]$ in its support.  For $a\in\scr{A}$, the basis vector $\psi_k([a])$ contains $[\beta w]$ in its support if and only if $w$ begins with $a$.  The situation for the remaining  basis vectors $T_k^{i-1}[\eta w]$ is more complicated, and is related to how the word $w$ overlaps itself.  Indeed, $[\beta w]$ occurs in $$T_k^{i-1}[\eta w] = \sum_{\beta^{i-1}w'=a_ia_{i+1}\cdots a_k}[w'] = \sum [a_{i}a_{i+1}\cdots a_k*\cdots*]$$ precisely if 
\begin{equation}\label{overlap}
a_0a_1\cdots a_{k-i} = a_{i}a_{i+1}\cdots a_k
\end{equation}
  For $1\leq i\leq d$, we let $c_{d-i}=1$ if this is so and $c_{d-i}=0$ otherwise, so that the matrix of $E_w$ with respect to our basis is 
\begin{equation}
	\left[\begin{array}{c|ccccc}
		\0 & \0 & \0 & \0 &  \cdots & \0  \\ \hline 
		\mathbf{e}_{a_0}^T & c_{d-1} & c_{d-2} & c_{d-3} &  \cdots & c_{0} \\ 
		\0^T & 0 & 0 & 0 &  \cdots & 0\\ 
		\0^T & 0 & 0 & 0 & \cdots & 0  \\
		\vdots & \vdots & \vdots & & \ddots & \vdots \\
		\0^T & 0 & 0 & & \cdots & 0 
		\end{array}\right]
\end{equation}
Subtracting, we see that the characteristic polynomial we seek is the determinant of 
\begin{equation}
	\left[\begin{array}{c|cccccc}
		T-t & \0 & \0 & \0 &  \cdots & \0 & \mathbf{e}_{a_{d+1}} \\ \hline 
		 -\mathbf{e}_{a_0}^T & -c_{d-1}-t & -c_{d-2} & -c_{d-3} &  \cdots & -c_{1} & -c_{0} \\ 
		\0^T & 1 & -t & 0 &  \cdots & 0&0	\\ 
		\0^T & 0 & 1 & -t &  \cdots & 0 &0\\
		\0^T & 0 & 0 & 1 & \cdots & 0 &  0 \\
		\vdots & \vdots & \vdots & & \ddots & \vdots& \vdots \\
		\0^T & 0 & 0 & & \cdots & 1 & -t
		\end{array}\right]
\end{equation}
The square submatrix with $1$s down the diagonal has full rank and can be exploited using row and column operations to clear out the row below the horizontal line, followed by the bottom-right corner, yielding the matrix 
\begin{equation}
	\left[\begin{array}{c|cccccc}
		T-t & \0 & \0 & \0 &  \cdots & \0 & \mathbf{e}_{a_{d+1}} \\ \hline 
		 -\mathbf{e}_{a_0}^T & 0 & 0 & 0 &  \cdots & 0 & -p(t) \\ 
		\0 & 1 & -t & 0 &  \cdots & 0&0	\\ 
		\0 & 0 & 1 & -t &  \cdots & 0 &0\\
		\0 & 0 & 0 & 1 & \cdots & 0 &  0 \\
		\vdots & \vdots & \vdots & & \ddots & \vdots& \vdots \\
		\0 & 0 & 0 & & \cdots & 1 & 0
		\end{array}\right]
\end{equation}
where $p(t)$ is the \emph{correlation polynomial} $$p(t) = t^d+c_{d-1}t^{d-1}+\cdots+c_1t+c_0 = \sum_{i=0}^dc_{d-i}t^{d-i}$$ and we agree that $c_d=1$ and think of this as accounting for the trivial full self-overlap of $w$.  Permuting the columns, we see that the determinant of this matrix is equal (up to a sign) to 
\begin{equation}\label{charpolyrel}
\det\left[\begin{array}{c|cccccc}
		T-t & \mathbf{e}_{a_{d+1}} & \0 & \0 &  \cdots & \0 & \0 \\ \hline 
		 -\mathbf{e}_{a_0}^T & -p(t) & 0 & 0 &  \cdots & 0 & 0 \\ 
		\0^T & 0& 1 & -t & \cdots &  \cdots & 0	\\ 
		\0^T & 0 & 0 & 1 &  \cdots & 0 &0\\
		\0^T & 0 & 0 & 0 & \cdots & 0 &  0 \\
		\vdots & \vdots & \vdots & & \ddots & \vdots& \vdots \\
		\0^T & 0 & 0 & 0 & \cdots & 1 & -t\\
		\0^T & 0 & 0 & 0 & \cdots & 0 & 1
		\end{array}\right] = \det\left[\begin{array}{c|c}
		T-t & \mathbf{e}_{a_{d+1}} \\ \hline 
		 -\mathbf{e}_{a_0}^T & -p(t)  \\ 
		\end{array}\right] = -p(t)\chi_T(t)\pm M_{a_{d+1};a_0}(t)
\end{equation}
where $\chi_T(t)=\det(T-t)$ and $M_{b;a}$ denotes the minor of $T-t$ obtained by deleting column $a$ and row $b$.  The last equality follows easily by expansion along the last row or column, but is also a special case of the general determinant lemma of the appendix.

Let $\lambda_0$ and $\lambda_1$ denote the Perron-Frobenius eigenvalues of $T$ and of $T_k\langle w\rangle$, respectively.  Lind's approach to the problem of bounding $\lambda_1$ is to first use Rouch\'e's Theorem to prove that, as $k$ grows, the characteristic polynomial of $T_k\langle w\rangle$ has a root close to $\lambda_0$, and then to bootstrap from this to bound the difference $|\lambda_1-\lambda_0|$ in terms of $k$.  In fact, Lind gives a lower bound for this difference as well as an upper bound, but we will only deal with the latter here.  Lind is also working under tighter assumptions on the word than we have imposed, namely that $h(w)=k-1$.  We will require $d\to\infty$ in our estimates below, though we note that since $T$ is irreducible, this is equivalent to $k\to\infty$ by the following observation.
\begin{lemm}\label{hgrows}
	Suppose that $T$ is irreducible and $G_T$ is not a cycle.  For any admissible $k$-word $w$, we have $$k-r\leq h(w)\leq k-1.$$
\end{lemm}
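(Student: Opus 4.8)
The plan is to treat the two inequalities separately. The bound $h(w)\le k-1$ is immediate from the definition of $h$, so the real content is the lower bound $h(w)\ge k-r$, which is only a genuine constraint when $k>r$; I will therefore assume $k>r$ and argue the contrapositive. Thus I will suppose that the prefix $a_1\cdots a_{k-r}$ already determines $w=a_1\cdots a_k$ among admissible $k$-words --- which is precisely the statement $h(w)\le k-r-1$ --- and deduce that $G_T$ must be a directed cycle.

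First I would recast the hypothesis in terms of walks in $G_T$. If $a_1\cdots a_{k-r}$ determines $w$, then the walk $a_{k-r}\to a_{k-r+1}\to\cdots\to a_k$ is the \emph{unique} walk with $r$ edges in $G_T$ starting at the vertex $a_{k-r}$: any competing such walk could be prepended with $a_1\cdots a_{k-r-1}$ to give a second admissible $k$-word sharing the prefix $a_1\cdots a_{k-r}$. Because $T$ is irreducible, $G_T$ is strongly connected and every vertex has out-degree at least $1$; using this I would strengthen uniqueness to the claim that each of the $r$ vertices $a_{k-r},a_{k-r+1},\dots,a_{k-1}$ has out-degree \emph{exactly} $1$. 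Indeed, if some $a_{k-r+i}$ with $0\le i\le r-1$ had a second outgoing edge $a_{k-r+i}\to c$ with $c\ne a_{k-r+i+1}$, strong connectivity would let me extend $c$ to a walk with $r-1-i$ further edges and splice it in, producing a second walk of length $r$ from $a_{k-r}$.

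Now I would invoke the fact that $G_T$ has exactly $r$ vertices. If the vertices $a_{k-r},\dots,a_{k-1}$ are pairwise distinct, they are all of $G_T$, so every vertex has out-degree $1$; a finite strongly connected digraph in which every vertex has out-degree $1$ is a single cycle, a contradiction. Otherwise $a_i=a_j$ for some $k-r\le i<j\le k-1$, and since each of $a_i,a_{i+1},\dots,a_{j-1}$ has out-degree $1$, the segment $a_i\to a_{i+1}\to\cdots\to a_j=a_i$ is forced; hence the vertex set $\{a_i,\dots,a_{j-1}\}$ is closed under passing to the unique successor and has no edges leaving it, so strong connectivity forces it to be all of $G_T$, and again $G_T$ is a cycle. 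Either way we contradict the assumption that $G_T$ is not a cycle, which completes the argument.

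The individual steps are short, and I expect no serious obstacle; the only points needing a little care are the passage from uniqueness of the length-$r$ walk to out-degree $1$ at every intermediate vertex, and keeping straight that exactly the $r$ indices $k-r,\dots,k-1$ are in play, so that the ``$r$ vertices of out-degree $1$ in an $r$-vertex graph'' dichotomy applies cleanly.
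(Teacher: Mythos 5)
Your argument is correct. The paper's proof is more direct: since $G_T$ is strongly connected and not a cycle, it has a vertex $v$ of out-degree at least two, and every vertex can reach $v$ in fewer than $r$ steps; so if $k-h(w)\geq r+1$, the forced continuation from $a_{h(w)+1}$ reaches $v$ before its last edge and can branch there, contradicting uniqueness. You instead take the contrapositive: assuming $a_1\cdots a_{k-r}$ determines $w$, you first extract that each of the $r$ vertices $a_{k-r},\dots,a_{k-1}$ has out-degree exactly one (using strong connectivity to extend any would-be branch to a full length-$r$ walk), and then conclude $G_T$ is a cycle via the dichotomy on whether those $r$ vertices are distinct. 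Both proofs rest on the same two facts about $G_T$ (strong connectivity gives out-degree at least one everywhere; not being a cycle gives a vertex of out-degree at least two), but where the paper invokes the diameter bound for an $r$-vertex strongly connected digraph, you substitute a pigeonhole among the last $r$ prefix vertices and a small closed-set argument. The result is a bit longer but fully self-contained, and I see no gap.
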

\begin{proof}
	That $h(w)\leq k-1$ is clear.  Since $G_T$ is strongly connected and not a cycle, it has vertex of out-degree at least two, and any vertex can be connected to it in fewer than $r$ steps.  It follows that $a_{h+1}$ cannot uniquely determine the rest of the word $a_{h+1}\cdots a_k$ if $k-h\geq r+1$. 
\end{proof}
\begin{prop}\label{pgrows}
	Let $\rho>1$.  There exists a positive constant $D$ such that $$|p(t)|\geq D|t|^d$$ holds on $|t|\geq\rho$, for all words $w$ with $d$ sufficiently large.
\end{prop}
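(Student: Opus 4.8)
The plan is to exploit that $p$ is not an arbitrary polynomial with coefficients in $\{0,1\}$ but a genuine \emph{correlation} polynomial. The crude estimate $|p(t)| \ge |t|^d - (|t|^{d-1}+\cdots+1)$ is positive only for $|t| > 2$, and no termwise bound can do better, since correlation polynomials really do have roots of absolute value exceeding $1$ — for instance $t^5 + t + 1 = (t^2+t+1)(t^3 - t^2 + 1)$ — so the hypothesis ``$d$ sufficiently large'' is essential. I would begin by rewriting $p$ in terms of self-overlaps: by (\ref{overlap}) one has $p(t) = \sum_{i \in P} t^{d-i}$, where $P$ is the set of $i$ with $0 \le i \le d$ for which the length-$(k+1-i)$ prefix of $w$ equals its length-$(k+1-i)$ suffix; equivalently $P$ is the set of periods of the string $w$ lying in $\{0,1,\dots,d\}$, and $0 \in P$. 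Since $d = h(w) \le k$, every element of $P$ is strictly less than the length $k+1$ of $w$. If $P = \{0\}$ then $p(t) = t^d$ and the claim holds with $D = 1$; otherwise set $\pi = \min(P \setminus \{0\})$, the shortest period of $w$.

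Two standard facts about periods of a finite word are the crux. First, if $w$ has period $\pi$ then it has period $j\pi$ for every $j$ with $j\pi \le k$ (chase the equalities $a_m = a_{m+\pi} = \cdots = a_{m+j\pi}$), so $\{0,\pi,2\pi,\dots,M\pi\} \subseteq P$ with $M = \lfloor d/\pi \rfloor$. Second, by the Fine--Wilf theorem, any period $q$ of $w$ not divisible by $\pi$ satisfies $q + \pi - \gcd(q,\pi) > k+1$ (else $w$ would have the shorter period $\gcd(q,\pi)$, contradicting minimality of $\pi$), whence $q \ge k+2-\pi$ and therefore $d - q \le \pi - 2$. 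Letting $P'$ denote the set of positive elements of $P$ not divisible by $\pi$, I would split $p$ into a ``harmonic'' and an ``exceptional'' part:
\[
p(t) = A(t) + B(t), \qquad A(t) = \sum_{j=0}^{M} t^{\,d-j\pi}, \qquad B(t) = \sum_{q \in P'} t^{\,d-q},
\]
the two sums running over disjoint sets of monomials.

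The exceptional part is negligible: its degree is at most $\min(d-\pi,\ \pi-2)$ — the bound $\le \pi-2$ is Fine--Wilf as above and $\le d-\pi$ holds because every nonzero period is $\ge \pi$ — hence $\deg B \le \lfloor (d-2)/2 \rfloor$, so on $|t| \ge \rho$
\[
|B(t)| \le \sum_{\ell=0}^{\lfloor d/2\rfloor - 1} |t|^{\ell} < \frac{|t|^{d/2}}{|t|-1} \le \frac{1}{\rho^{d/2}(\rho-1)}\,|t|^{d},
\]
which is $o(|t|^d)$ uniformly on $|t|\ge\rho$ as $d \to \infty$. For the harmonic part I would sum the geometric series, $A(t) = t^{s}\sum_{\ell=0}^{M} t^{\ell\pi} = \frac{t^{\,d+\pi} - t^{\,s}}{t^{\pi}-1}$ with $s = d - M\pi \in \{0,\dots,\pi-1\}$, and then, using the (reverse) triangle inequality together with $s \le \pi-1$ and $\pi \le d$, deduce that on $|t|\ge\rho$
\[
\frac{|A(t)|}{|t|^{d}} \ge \frac{|t|^{\pi} - |t|^{\pi-1-d}}{|t|^{\pi}+1} > \frac{|t|^{\pi}-1}{|t|^{\pi}+1} \ge \frac{\rho-1}{\rho+1},
\]
the last inequality because $x \mapsto \frac{x-1}{x+1}$ is increasing and $|t|^{\pi} \ge \rho$. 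Combining, $|p(t)| \ge |A(t)| - |B(t)| \ge \big(\frac{\rho-1}{\rho+1} - \rho^{-d/2}(\rho-1)^{-1}\big)|t|^{d}$, so the proposition follows with $D = \frac{\rho-1}{2(\rho+1)}$ once $d$ is large enough that $\rho^{-d/2}(\rho-1)^{-1} \le \frac{\rho-1}{2(\rho+1)}$.

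The step I expect to carry the argument is the degree bound $\deg B \le \lfloor(d-2)/2\rfloor$. A priori the exceptional periods of $w$ could be arbitrarily large up to $d$, and a high-degree $B$ would swamp $A(t) \approx t^{d}$ on a circle $|t| = \rho$ with $\rho$ near $1$, which is precisely where the naive bound breaks down; it is the simultaneous constraints $\deg B \le d - \pi$ (periods are $\ge \pi$) and $\deg B \le \pi-2$ (Fine--Wilf forces a non-harmonic period to sit within $\pi-1$ of the end of a word of length $k+1 \ge d+1$) that force $\deg B < d/2$. The only size hypothesis used is $d \le k$; Lemma~\ref{hgrows} enters only to ensure that ``$d\to\infty$'' and ``$k\to\infty$'' are equivalent, which is how the statement of the proposition is applied in practice.
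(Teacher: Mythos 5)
Your proof is correct, and it takes a genuinely different route from the paper's. The paper fixes a cutoff $\alpha\in(0,1)$ and splits into two cases according to whether the fundamental period $\pi$ is at most $\alpha d$ or at least $\alpha d$, choosing a different dominant/error decomposition in each regime: in the small-period case the dominant term is the full geometric sum and the error is $\psi(t)$ of degree $\leq \pi-3$; in the large-period case the dominant term is just $t^d$ and everything else (including the lower geometric terms $t^{d-\pi},t^{d-2\pi},\dots$) is swept into the error. Your observation is that this case split is unnecessary: the exceptional part $B(t)=\psi(t)$ satisfies both $\deg B\leq \pi-2$ (Fine--Wilf, or the paper's unnamed lemma) \emph{and} $\deg B < d-\pi$ (exceptional periods are $>\pi$), and these two constraints together force $\deg B<d/2$ regardless of how $\pi$ compares to $d$. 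That single degree bound makes $B(t)$ uniformly $o(|t|^d)$ on $|t|\geq\rho$, so you can always keep the full geometric sum as the dominant term and run one estimate. Both proofs hinge on the same structural facts about periods of $w$, and both land on the same constant $\frac{\rho-1}{2(\rho+1)}$; what yours buys is the elimination of the parameter $\alpha$ and the two-case analysis. Two small bookkeeping remarks: the paper actually has $d=h\leq k-1$ (not just $d\leq k$), which tightens your degree bound to $\pi-3$ to match the paper's $\psi$; and the displayed bound $|t|^{\pi}-|t|^{\pi-1-d}$ should be read as a lower bound for $|t|^{\pi}-|t|^{s-d}$ (using $s\leq\pi-1$), not an equality — the inequality chain is correct either way since $\pi-1-d<0$.
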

\begin{rema}
	Estimates of this type recur throughout the paper.  Whenever we refer to a ``constant'' in this context, we mean to say that it depends only on the original shift $T$ and $\rho$ (which will itself depend on $T$ in the sequel).   In particular, such constants are independent of the words in $\scr{C}$ and any of their features like $h$ or $d$.  This comment applies not only to the visible constants such as $D$ here, but also to the implied constant in the phrase ``sufficiently large.''
\end{rema}
The proof of this proposition requires that we analyze the periodic structure of $w$ and ultimately leads us to consider two cases: large period and small period (relative to $d$). 
\begin{defi}
	The {\it fundamental period} of $w$ is the smallest positive integer $p$ with $c_{d-p}=1$.  If no such integer exists, we set $p=k+1$.
\end{defi}
A word $w$ with fundamental period $p$ is the self-concatenation $$w=BBB\cdots BB^*$$ of single block of length $p$, perhaps with a truncated copy $B^*$ of $B$ at the end.
\begin{lemm}\label{Bcopy}
	If a complete copy of $B$ occurs beginning at letter $a_i$ in the word $w$, then $p|i$.
\end{lemm}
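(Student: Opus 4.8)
The plan is to unpack the string-periodicity of $w$ recorded by the correlation coefficients and then apply one elementary fact about cyclic words.

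First I would clear away the case $p=k+1$. Here the block $B$ has length $k+1$, the full length of $w$, so a complete copy of $B$ can only begin at $i=0$, and $p\mid 0$. Assume henceforth $p\le d$. By the overlap description (\ref{overlap}), the equation $c_{d-p}=1$ says $a_0a_1\cdots a_{k-p}=a_pa_{p+1}\cdots a_k$, i.e.\ $a_j=a_{j+p}$ whenever $0\le j\le k-p$. A one-line induction on $m$ promotes this to
\[
a_m=a_{m\bmod p}\qquad(0\le m\le k),
\]
exhibiting $w$ as the length-$(k+1)$ prefix of the periodic word $BBB\cdots$, the description given just before the lemma.

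Now suppose a complete copy of $B$ begins at $a_i$, so that $i+p-1\le k$ and $a_{i+j}=a_j$ for $0\le j\le p-1$; I must show $\rho:=i\bmod p$ equals $0$. Combining the occurrence with the displayed periodicity, for every $j\in\{0,\dots,p-1\}$ one has
\[
a_j=a_{i+j}=a_{(i+j)\bmod p}=a_{(\rho+j)\bmod p}.
\]
Regarding $B=a_0\cdots a_{p-1}$ as a cyclic word of length $p$, this says precisely that $B$ is fixed by the cyclic shift by $\rho$. The elementary fact I would invoke is that the cyclic shifts fixing a given word of length $p$ are closed under composition; consequently they include every shift by a multiple of $\rho$ modulo $p$, and these are exactly the shifts by multiples of $g:=\gcd(\rho,p)$. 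In particular $B$ is fixed by the shift by $g$, and $g\mid p$, so $B=C^{p/g}$ with $C=a_0\cdots a_{g-1}$. Then $BBB\cdots=CCC\cdots$, so $w$ also satisfies $a_j=a_{j+g}$ for $0\le j\le k-g$, which by (\ref{overlap}) reads $c_{d-g}=1$. Were $\rho\ne 0$ we would have $1\le g\le\rho<p\le d$, and then $c_{d-g}=1$ with $g<p$ would contradict the minimality in the definition of the fundamental period. Hence $\rho=0$, i.e.\ $p\mid i$.

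The only ingredient here that is not pure index-chasing through (\ref{overlap}) is the passage from ``$B$ is fixed by the shift by $\rho$'' to ``$B$ is a repetition of a block of length $\gcd(\rho,p)$'' --- equivalently, that the shifts fixing a cyclic word form a subgroup of $\Z/p\Z$. This is standard: the stabilizing shifts are closed under addition, hence form a cyclic group generated by a divisor of $p$; alternatively, one may feed the two periods $p$ and $i$ of the length-$(i+p)$ prefix $a_0\cdots a_{i+p-1}$ of $w$ into the Fine--Wilf theorem, whose hypothesis $i+p\ge p+i-\gcd(p,i)$ is automatic. That is the one point I would write out with care; everything else is bookkeeping.
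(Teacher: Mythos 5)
Your proof is correct and follows the same strategy as the paper: translate the occurrence down by $p$-periodicity to land at index $r = i \bmod p$, then argue that $0 < r < p$ would contradict the minimality of $p$. Where the paper simply asserts that contradiction, you fill in the detail via the subgroup of cyclic shifts stabilizing $B$ (equivalently Fine--Wilf), deriving $c_{d-g}=1$ with $g=\gcd(\rho,p)<p$; this is sound, though a small shortcut is available: since $a_m = a_{m \bmod p}$ on all of $w$, the cyclic-shift invariance $a_j = a_{(j+\rho)\bmod p}$ already makes $w$ itself $\rho$-periodic, giving $c_{d-\rho}=1$ directly and bypassing the $\gcd$ step.
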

\begin{proof}
	The periodicity of $w$ implies that $p$-translates of any occurrence of $B$ are also occurrences.  If $r$ denotes the remainder of $i$ upon division by $p$, the result is that $B$ occurs beginning at $a_r$ as well.  Now $0<r<p$ would contradict the minimality of $p$, so we conclude that $r=0$.
\end{proof}
\begin{lemm}
	If $c_{d-i}=1$ we have either $p|i$ or $i\geq k+2-p$.
\end{lemm}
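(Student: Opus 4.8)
The plan is to unpack the hypothesis $c_{d-i}=1$ into the self-overlap equation (\ref{overlap}), namely $a_0a_1\cdots a_{k-i}=a_ia_{i+1}\cdots a_k$, and then to split on the length $L=k+1-i$ of this overlap relative to the fundamental period $p$. I expect the argument to hinge entirely on one observation: once the overlap is at least as long as $p$, it forces a verbatim copy of the defining block $B=a_0\cdots a_{p-1}$ to appear inside $w$ starting at position $i$, after which Lemma \ref{Bcopy} pins $i$ down modulo $p$.

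Concretely, I would first dispose of $i=0$, where $c_d=1$ by convention and $p\mid 0$ trivially, so assume $i\ge 1$. Writing $L=k+1-i$ for the common length of the two sides of (\ref{overlap}): if $L<p$ then $i=k+1-L\ge k+2-p$, which is the second alternative. Otherwise $L\ge p$, i.e.\ $i\le k+1-p$, equivalently $p-1\le k-i$. The overlap (\ref{overlap}) asserts $a_{i+m}=a_m$ for all $0\le m\le k-i$; restricting to the range $0\le m\le p-1$, which is legitimate precisely because $p-1\le k-i$, gives $a_ia_{i+1}\cdots a_{i+p-1}=a_0a_1\cdots a_{p-1}=B$. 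Hence a complete copy of $B$ begins at the letter $a_i$ of $w$, and Lemma \ref{Bcopy} yields $p\mid i$.

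The only point requiring care is the index bookkeeping: one must confirm that the window $a_i\cdots a_{i+p-1}$ really lies inside $w$ (that is, $i+p-1\le k$) and that (\ref{overlap}) supplies the equalities $a_{i+m}=a_m$ over the full range $0\le m\le p-1$ — but both of these amount to exactly the inequality $L\ge p$, so the two branches of the dichotomy abut with no gap. I do not anticipate a genuine difficulty; the substance lies in Lemma \ref{Bcopy}, and this lemma simply recasts it in the form needed for the large-period/small-period split that drives the proof of Proposition \ref{pgrows}.
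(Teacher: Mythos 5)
Your proof is correct and follows the same route as the paper: split on whether the overlap length $k+1-i$ is at least $p$, in which case a full copy of $B$ appears at position $i$ and Lemma \ref{Bcopy} gives $p\mid i$, and otherwise conclude $i\geq k+2-p$ directly. You have merely made the index bookkeeping explicit, which the paper leaves tacit.
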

\begin{proof}
	Suppose that $c_{d-i}=1$.  Looking at (\ref{overlap}), we see that if $k-i+1\geq p$ then there is a complete copy of $B$ beginning at $a_i$, and the result now follows from Lemma \ref{Bcopy}.
\end{proof}
The upshot of these Lemmas is that the polynomial $p(t)$ takes the form $$p(t) = t^d+t^{d-p}+t^{d-2p}+\cdots+t^{d-Mp}+\psi(t)$$ where $$M = \left\lfloor \frac{d}{p}\right\rfloor$$ and $\psi(t)$ has degree at most $$d-(k+2-p)\leq k-1-(k+2-p)=p-3.$$  Lind's idea to get a lower bound is essentially to use this to write $$p(t) = D(t)+E(t)$$ as a dominant terms plus an error term and then bound $D(t)$ from below and bound $E(t)$ from above.  How $D(t)$ and $E(t)$ are chosen depends on the size of $p$ relative $d$.  In the estimates that follow, we fix $\rho>1$ and assume that $t\in\C$ satisfies $|t|\geq \rho$.

Fix $\alpha\in(0,1)$, which will function as a small/large cutoff for $p$ relative to $d$.  Any such $\alpha$ will do, though we find it clarifying to leave it as unspecified rather than fix a particular value, {\it e.g.} $\alpha=1/2$.
\begin{proof}[Proof of Proposition \ref{pgrows}]
Suppose first that $p\leq\alpha d$.   Here, we take $$D(t) = t^d+t^{d-p}+\cdots+t^{d-Mp} =t^d\frac{t^{p}-t^{-Mp}}{t^p-1}$$ and $E(t)=\psi(t)$.  We have
$$|D(t)| \geq |t|^d\frac{|t|^{p}-|t|^{-Mp}}{|t|^p+1} \geq |t|^d\frac{|t|^{p}-1}{|t|^p+1}\geq  |t|^d\left(\frac{\rho-1}{\rho+1}\right)$$
As for the error term, note that since all coefficients of $\psi(t)$ are $0$ or $1$, we have 
$$|E(t)|\leq 1+|t|+\cdots+|t|^{p-3} = \frac{|t|^{p-2}-1}{|t|-1}\leq\frac{1}{\rho-1}|t|^{p-2}\leq \frac{1}{\rho^2(\rho-1)}|t|^{\alpha d}$$
It follows that $$|p(t)| \geq |t|^d\left(\frac{\rho-1}{\rho+1}-\frac{1}{\rho^2(\rho-1)} |t|^{(\alpha-1)d}\right) \geq |t|^d\left(\frac{\rho-1}{\rho+1}- \frac{ \rho^{(\alpha-1)d}}{\rho^2(\rho-1)}\right) \geq |t|^d\cdot \frac{\rho-1}{2(\rho+1)}$$ for $d$ sufficiently large.

Now suppose that $p\geq \alpha d$.    Here, we simply take $D(t) = t^d$ and let $E(t)$ consist of the non-leading terms of $p(t)$.  We have 
$$|E(t)|\leq |t|^{d-p}+|t|^{d-p-1}+\cdots+|t|+1 = \frac{|t|^{d-p+1}-1}{|t|-1}\leq \frac{1}{\rho-1}|t|^{d-p+1}\leq \frac{1}{\rho-1}|t|^{(1-\alpha)d+1}$$
so $$|p(t)| \geq |t|^d-\frac{1}{\rho-1}|t|^{(1-\alpha)d+1} = |t|^d\left(1-\frac{1}{\rho-1}|t|^{-\alpha d+1}\right) \geq |t|^d\left(1-\frac{1}{\rho-1}\rho^{-\alpha d+1}\right)
\geq \frac{1}{2}|t|^d$$ for $d$ sufficiently large.  The proposition follows by taking $D$ to be the smaller of the constants obtained in the two cases and by taking ``sufficiently large'' to mean at least the larger of the implied constants in each case.  
\end{proof}

\begin{prop}\label{onewordqualbound}
	Suppose that $\rho>1$ satisfies $|\lambda|<\rho<\lambda_0$ for all non-dominant eigenvalues $\lambda$ of $T$.   For $d$ sufficiently large, we have $\lambda_1\geq \rho$.
\end{prop}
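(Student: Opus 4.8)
The plan is to run a Rouch\'e argument on the circle $|t|=\rho$, using the factorization (\ref{charpolyrel}). Recall from there that, up to an overall sign, the characteristic polynomial of $T_k\langle w\rangle$ restricted to $W_k\langle w\rangle$ equals
$$g(t) = p(t)\chi_T(t) \pm M_{a_{d+1};a_0}(t),$$
and that the characteristic polynomial of $T_k\langle w\rangle$ on all of $V_k$ is obtained from $g(t)$ by multiplying by a power of $t$ (the nilpotent part), which contributes no zeros in the region $|t|\ge\rho$. So it suffices to exhibit a zero of $g$ with $|t|>\rho$: such a zero is an eigenvalue of $T_k\langle w\rangle$, and since $T_k\langle w\rangle$ is a non-negative matrix its Perron--Frobenius eigenvalue $\lambda_1$ dominates the moduli of all its eigenvalues, so the existence of such a zero forces $\lambda_1>\rho$, which is a shade stronger than the asserted bound.

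First I would estimate the two terms of $g$ on $|t|=\rho$. For the main term, Proposition \ref{pgrows} gives $|p(t)|\ge D\rho^d$ once $d$ is large; meanwhile the hypothesis on $\rho$ --- that it strictly separates the moduli of the non-dominant eigenvalues of $T$ from $\lambda_0$ --- means $\chi_T$ has no zero on $|t|=\rho$, so $|\chi_T(t)|\ge\delta$ there for some $\delta>0$ depending only on $T$ and $\rho$. Hence $|p(t)\chi_T(t)|\ge D\delta\rho^d$ on the circle. For the error term, $M_{a_{d+1};a_0}(t)$ is a minor of $T-t$, hence a polynomial in $t$ of degree at most $r-1$ whose coefficients are bounded in terms of $r$ alone (the entries of $T$ lie in $\{0,1\}$), so $|M_{a_{d+1};a_0}(t)|$ is bounded on $|t|=\rho$ by a constant depending only on $T$ and $\rho$. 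Consequently, for all $d$ sufficiently large, $|p(t)\chi_T(t)|>|M_{a_{d+1};a_0}(t)|$ on the entire circle $|t|=\rho$.

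Rouch\'e's theorem then shows that $g$ and $p\chi_T$ have the same number of zeros inside $|t|<\rho$, and that $g$ is itself nonvanishing on $|t|=\rho$. To count, observe that by Proposition \ref{pgrows} all $d$ zeros of $p$ lie in $|t|<\rho$, while $\chi_T$ contributes exactly its $r-s$ non-dominant zeros there, the $s$ dominant ones having modulus $\lambda_0>\rho$; thus $p\chi_T$, and hence $g$, has precisely $d+r-s$ zeros in $|t|<\rho$. Since $g$ has degree $\dim W_k\langle w\rangle=d+r$ and no zeros on $|t|=\rho$, it must have $s\ge 1$ zeros with $|t|>\rho$, which completes the argument. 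I do not expect a genuine obstacle here, as the real work is already done in Proposition \ref{pgrows}; the one point that needs care is making the degree count balance exactly --- that $\dim W_k\langle w\rangle=d+r$ (from the basis exhibited at the start of this section), that $\deg(p\chi_T)=d+r$ so that the lower-degree term $M_{a_{d+1};a_0}$ cannot kill the leading coefficient, and that $\rho$ truly avoids the spectrum of $T$ so that $\chi_T$ stays bounded away from $0$ on the contour.
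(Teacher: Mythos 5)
Your proof is correct and follows essentially the same route as the paper's: a Rouch\'e argument on $|t|=\rho$, using Proposition~\ref{pgrows} to bound $|p|$ from below, the hypothesis on $\rho$ to keep $|\chi_T|$ away from zero, and the fact that minors of $T-t$ are uniformly bounded. You carry out the zero-count a bit more explicitly (tracking the $s$ dominant zeros and the degree $d+r$) and address the nilpotent factor $t^{\dim V_k - d - r}$ that the paper leaves tacit, but the substance is the same.
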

\begin{proof}
	Let $X(t)$ denote the characteristic polynomial of $T_k\langle w\rangle$.  We have seen in (\ref{charpolyrel}) that, up to a sign, $X(t)$ differs from $p(t)\chi_T(t)$ by a minor $M(t)$ of $T-t$.  As there are only finitely many such minors, they are collectively bounded by a single constant on the compact set $|t|=\rho$.  The hypothesis on $\rho$ implies that $\chi_T(t)$ is nonvanishing on $|t|=\rho$, and hence is bounded below by a nonzero constant.  Now Proposition \ref{pgrows} implies that $$|p(t)\chi_T(t)|>|M(t)|\ \ \  \mbox{on}\ \ |t|=\rho$$  for $d$ sufficiently large, and Rouch\'e's Theorem implies that $X(t)$ and $p(t)\chi_T(t)$ have the same number of roots in $|t|<\rho$ for such $d$.  Since these two polynomials have equal degree, they must have the same number of roots with $|t|\geq \rho$, and in particular we must have $\lambda_1\geq \rho$.
\end{proof}

\begin{theo}\label{Lindmain}
	Suppose that $T$ is irreducible with $\lambda_0>1$.   There exists a constant $C$ such that $$|\lambda_1-\lambda_0| < C\lambda_0^{-d}$$ for $d$ sufficiently large.
\end{theo}
\begin{proof}
	Choose $\rho>1$ with $|\lambda|<\rho<\lambda_0$ for all non-dominant eigenvalues $\lambda$ of $T$.  
	Write $\chi_T(t) = (t^s-\lambda_0^s)q(t)$ as in Section 1 and plug $\lambda_1$ into (\ref{charpolyrel}) to see $$|\lambda_1^s-\lambda_0^s| = \frac{|M_{a_{d+1};a_0}(\lambda_1)|}{|p(\lambda_1)|\cdot |q(\lambda_1)|}$$  Finiteness of the collection of minors and the fact that $\lambda_1\leq \lambda_0$ implies a 
universal upper bound for the numerator.  The choice of $\rho$, Proposition \ref{onewordqualbound}, and the discussion at the end of Section 1 imply a universal nonzero lower bound on $|q(\lambda_1)|$ for $d$ sufficiently large.  Finally, Proposition \ref{pgrows} and Proposition \ref{onewordqualbound} imply that there is a positive constant $C_0$ depending only on $T$ and $\rho$ such that 
\begin{equation*}
|\lambda_1^s-\lambda_0^s|\leq C_0\lambda_1^{-d}
\end{equation*}
 for $d$ sufficiently large.   Since $1<\lambda_1\leq\lambda_0$, we have 
 \begin{equation}\label{almost}
 |\lambda_1-\lambda_0| \leq |\lambda_1^s-\lambda_0^s|\leq C_0\lambda_1^{-d}
 \end{equation}
 for such $d$.

Finally, we bootstrap from this as in \cite{Lind} by noting that the differentiability of $\log$ implies that there exists $s>0$ such that $$\log(\lambda_1)\geq \log(\lambda_0)+s(\lambda_1-\lambda_0)$$ for $\lambda_1-\lambda_0$ sufficiently small.  Thus $$\lambda_1^{-d}= \lambda_0^{-d\frac{\log(\lambda_1)}{\log(\lambda_0)}}\leq \lambda_0^{-d}\cdot \lambda_0^{-ds\frac{\lambda_1-\lambda_0}{\log(\lambda_0)}}$$  The second factor is bounded as $d\to \infty$ by (\ref{almost}), which gives $$|\lambda_1-\lambda_0|< C\lambda_0^{-d}$$ for some constant $C$ and sufficiently large $d$, as desired.

\end{proof}

\section{Two words: Structure of $\WkC$ and $T_k$}
Now suppose that $\scr{C}$ consists of a pair $w_1=a_0a_1\cdots a_k$ and $w_2= b_0b_2\cdots b_k$ of admissible $(k+1)$-words.  We build up a basis of $W_k\langle\scr{C}\rangle$ beginning with the single word $w_1$ as in the previous section: let $d_1= h(w_1)$ be as above, so that the set $$\{\psi_k([a])\ |\ a\in\scr{A}\}\cup \{[\eta w_1], T_k[\eta w_1], \dots, T_k^{d_1-1}[\eta w_1]\}$$ is linearly independent and spans the subspace $W_k\langle w_1\rangle$ of $W_k\langle w_1,w_2\rangle$.

Now we bring in $w_2$.  Let $d_2$ denote the minimal non-negative integer for which $T_k^{d_2}[\eta w_2]\in W_k\langle w_1\rangle$.  By Lemma \ref{absorbing}, the set 
\begin{equation}\label{basis}
\{\psi_k([a])\ |\ a\in\scr{A}\}\cup \{[\eta w_1], T_k[\eta w_1], \dots, T_k^{d_1-1}[\eta w_1]\} \cup \{[\eta w_2], T_k[\eta w_2], \dots, T_k^{d_2-1}[\eta w_2]\}
\end{equation}
 is a basis of $\WkC$.  In order to determine the matrix of $T_k$ with respect to this basis, we must explicate both $T_k^{d_1}[\eta w_1]$ and $T_k^{d_2}[\eta w_2]$.  The first of these is as in the previous section: $$T_k^{d_1}[\eta w_1] = \psi_k([a_{d_1+1}]).$$  The situation for $w_2$ depends on how the words $w_1$ and $w_2$ interact. Clearly we have $d_2\leq h_2$ with equality if and only if we have $T_k^{d_2}[\eta w_2]\in \psi_k(V_1)$, in which case $$T_k^{d_2}[\eta w_2] = \psi_k([b_{d_2+1}])$$ as in the one-word situation.    

Suppose that $d_2<h_2$ and set $\delta=h_2-d_2$.  This implies some sort of nontrivial interaction between $w_1$ and $w_2$, which we now explore.  We have 
\begin{equation}\label{origrelation}
T_k^{d_2}[\eta w_2] = \alpha_0[\eta w_1]+\alpha_1T_k[\eta w_1]+\cdots+\alpha_{d_1-1}T_k^{d_1-1}[\eta w_1] + \psi_k(v)
\end{equation}
 with not all coefficients $\alpha_i$ equal to $0$.  Applying $T_k^{\delta}$ to both sides and absorbing $T_k^{h_2}[\eta w_2]$ and $T_k^i[\eta w_1]$ for $i\geq d_1$ into $\psi_k(V_1)$ we see $$\alpha_0T_k^{\delta}[\eta w_1]+\cdots+\alpha_{d_1-1-\delta}T_k^{d_1-1}[\eta w_1]\in \psi_k(V_1)$$ which forces $$\alpha_0=\cdots = \alpha_{d_1-1-\delta}=0$$ by Lemma \ref{absorbing}.  Now applying $T_k^{\delta-1}$ and reasoning similarly we see $$T_k^{h_2-1}[\eta w_2] \equiv \alpha_{d_1-\delta}T_k^{d_1-1}[\eta w_1] \pmod{\psi_k(V_1)}$$ which implies that $\alpha_{d_1-\delta}\neq 0$.  Let $b=b_{d_2+1}$, and note that each term in the relation (\ref{origrelation}) is either fixed or killed by projection onto the ``first symbol is $b$ subspace.''  Since this clearly fixes the left-hand side, it must in fact fix every term by uniqueness of this linear relation.  It follows that $v=\gamma [b]$ for some $\gamma$.  Thus our relation above takes the form 
\begin{equation}\label{bigrelation}
T_k^{d_2}[\eta w_2] = \alpha_{i_1}T_k^{i_1}[\eta w_1]+\cdots+\alpha_{i_n}T_k^{i_n}[\eta w_1]+ \gamma\psi_k([b])
\end{equation}
 where $i_1=d_1-\delta$ and we have retained only the nonzero $\alpha$-coefficients.  

Let $S_0$ denote the support of $T_k^{d_2}[\eta w_2]$, which is precisely the set of admissible $k$-words beginning with $$s_0:=b_{d_2+1}\cdots b_{h_2}b_{h_2+1}$$ since the rest of $w_2$ is then forced.  Similarly, let $S_m$ denote the support of $T_k^{i_m}[\eta w_1]$, namely the set of words beginning $$s_m:=a_{i_m+1}\cdots a_{h_1}a_{h_1+1}.$$  Looking at (\ref{bigrelation}), we see that $s_1$ is the concatenation $$s_1=B_1B_2\cdots B_{n-1}B_n$$ of $n$ blocks, each of which begins with $b$, namely,  $$B_1=a_{i_1+1}\cdots a_{i_2},\ \  B_2=a_{i_2+1}\cdots a_{i_3},\ \ \dots,\ \ B_n=a_{i_n+1}\cdots a_{h_1+1}.$$  In this notation, the words $s_2, s_3, \dots, s_n$ are obtained by successively dropping blocks off of the left side of $s_1$.

For an admissible word $A$ and a positive integer $e$, let $A^e$ denote the $e$-fold self-concatenation of $A$.  We call an admissible word $B$ {\it simple} if it is not equal to $A^e$ for any word $A$ and $e>1$.  Note that a word that overlaps itself in the manner discussed above can be simple, as the word $abcab$ illustrates.  On the other hand, the following lemma shows that a stronger kind of self-overlap does preclude simplicity.
\begin{lemm}\label{simple}
	Suppose that $B$ occurs nontrivially in $BB$ (that is, not merely at the beginning or end).  Then $B$ is not simple.
\end{lemm}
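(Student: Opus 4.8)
\section*{Proof proposal for Lemma \ref{simple}}

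The plan is to translate the self-overlap hypothesis into a statement about the periods of $B$ and then to run an elementary argument of Fine--Wilf type. Write $B = b_1 b_2\cdots b_m$ with $m = |B|$. To say that $B$ occurs nontrivially in $BB = b_1\cdots b_m\,b_1\cdots b_m$ means precisely that there is an integer $i$ with $1\le i\le m-1$ such that $(BB)_{i+j} = b_j$ for $j=1,\dots,m$. Splitting this according to whether the index $i+j$ falls in the first or the second copy of $B$, the single condition is equivalent to the two families of relations
\[
b_{i+j} = b_j \quad (1\le j\le m-i), \qquad b_{i+j-m} = b_j \quad (m-i < j\le m).
\]

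The key step is to observe that, taken together, these say exactly that the map $\beta\colon\Z/m\Z\to\scr{A}$ defined by $\beta(\bar k)=b_k$ for $k\in\{1,\dots,m\}$ is invariant under translation by $\bar i$. Indeed, for $k\le m-i$ the first relation (with $j=k$) gives $b_k = b_{k+i}$ with $k+i$ already in $\{1,\dots,m\}$, while for $m-i<k\le m$ the second relation (with $j=k$) gives $b_k = b_{k+i-m}$, and $k+i-m$ is exactly the representative in $\{1,\dots,m\}$ of $k+i$ modulo $m$. Hence $\beta(\bar k + \bar i)=\beta(\bar k)$ for every $k$, so $\beta$ is invariant under the subgroup of $\Z/m\Z$ generated by $\bar i$, which is $d\,\Z/m\Z$ with $d=\gcd(i,m)$. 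Therefore $\beta$ factors through $\Z/d\Z$, i.e.\ $b_k$ depends only on $k$ modulo $d$, so $B = A^{m/d}$ where $A = b_1\cdots b_d$. The nontriviality hypothesis $1\le i\le m-1$ forces $d=\gcd(i,m)$ to be a proper divisor of $m$, so $m/d\ge 2$ and $B$ is not simple.

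The only delicate point is the bookkeeping in the middle step: one must check that the two index ranges in the displayed relations dovetail to give genuine invariance under $+\,\bar i$ on the cyclic group $\Z/m\Z$, rather than merely a one-sided periodicity statement on a sub-interval of $\{1,\dots,m\}$. Once that is verified the group-theoretic conclusion is immediate, and the hypothesis that the occurrence is neither at the beginning ($i=0$) nor at the end ($i=m$) is exactly what guarantees that $d$ is a proper divisor of $m$. (This is of course a special case of the Fine--Wilf theorem, but the direct argument above is self-contained and all that is needed here.)
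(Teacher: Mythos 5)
Your proof is correct but takes a genuinely different route from the one in the paper. The paper observes that a nontrivial occurrence of $B$ in $BB$ splits $B$ into nonempty pieces $B=B_1B_2$ with $B_1B_2=B_2B_1$, and then proves by a minimal-counterexample induction that any two commuting strings are powers of a common string (essentially the Lyndon--Sch\"utzenberger commutation lemma), from which non-simplicity follows. You instead read the overlap condition as the statement that the index function $\beta\colon\Z/m\Z\to\scr{A}$, $\beta(\bar k)=b_k$, is invariant under translation by $\bar i$; invariance under the subgroup $\langle\bar i\rangle = d\,\Z/m\Z$ with $d=\gcd(i,m)$ then forces $B=A^{m/d}$ for $A=b_1\cdots b_d$, and the nontriviality $1\le i\le m-1$ gives $d<m$. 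Your route is slicker precisely because the \emph{cyclic} overlap hypothesis yields a two-sided periodicity on $\Z/m\Z$, so you get the gcd period for free by pure group theory with no length bookkeeping; the usual Fine--Wilf inequality never has to be checked. The paper's commutation-lemma route is slightly longer but yields, as a byproduct, the standalone and frequently reusable fact that $CD=DC$ implies $C,D$ are powers of a common string, which is a natural fit for a combinatorics-on-words toolbox even if it isn't reused elsewhere in the paper. Both are valid; your version is the more economical for proving exactly this lemma.
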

\begin{proof}
	If $B$ occurs nontrivially in $BB$, it meets the first copy in a nonempty subword $B_1$ and the second copy in a nonempty word $B_2$.  Then $B$ is simultaneously equal to both concatenations $$B=B_1B_2=B_2B_1.$$
	
	We claim that any pair of strings that commute in this fashion must be powers of a common string.  If not, let $w=CD=DC$ be the shortest counterexample.  If $C$ and $D$ are of equal length, then they must coincide and we have $w=C^2$ contrary to our assumption.  Otherwise, we may assume that $C$ is the shorter word and then $CD=DC$ implies that $D=CC'$ for some word $C'$.  Pruning $C$ from the left side of $CD=DC$ then yields $CC'=C'C$.  Minimality of $w$ implies that $C$ and $C'$ are powers of a common string, which implies that $w$ is a power of this string as well, contrary to our initial assumption.
\end{proof}

\begin{prop}\label{bigsummary}
	Suppose that $d_2<h_2$.  One of the following holds.
	\begin{enumerate}
	\renewcommand{\labelenumi}{(\Alph{enumi})}
	\item 
	\begin{enumerate}
	\renewcommand{\labelenumii}{(\roman{enumii})}
	\item The supports $S_0,\dots,S_n$ are pairwise disjoint and collectively exhaust all admissible $k$-words beginning with $b$.  
	\item $\gamma=1$ and $\alpha_{i_1}=\cdots=\alpha_{i_n}=-1$
	\item $B_1=B_2=\cdots=B_{n-1}$ is simple and $B_n$ differs from a truncation of this common block exactly in the last symbol
	\end{enumerate}
	\item 
	\begin{enumerate}
	\renewcommand{\labelenumii}{(\roman{enumii})}
	\item $n=2$
	\item $S_0$ and $S_1$ are disjoint and exhaust $S_2$
	\item $\gamma=0$, $\alpha_{i_1}=-1$, and $\alpha_{i_2}=1$
	\end{enumerate}
	\item 
	\begin{enumerate}
	\renewcommand{\labelenumii}{(\roman{enumii})}
	\item $n=1$
	\item $S_0=S_1$
	\item $\gamma=0$, $\alpha_{i_1}=1$
	\end{enumerate}
	\end{enumerate}
\end{prop}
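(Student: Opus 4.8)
The plan is to convert the single linear relation (\ref{bigrelation}) into a combinatorial identity among cylinder sets of admissible $k$-words, and then classify, by a finite case analysis, which scalars and cylinders can occur. The first step is to reinterpret the vectors in (\ref{bigrelation}): each of $T_k^{d_2}[\eta w_2]$, $T_k^{i_m}[\eta w_1]$ and $\psi_k([b])$ is a sum of \emph{distinct} standard basis vectors, namely the $(0,1)$-vector supported on a cylinder. Writing $\mathbf{1}_Z$ for the indicator of a set $Z$ of admissible $k$-words, we have $T_k^{d_2}[\eta w_2]=\mathbf{1}_{S_0}$, $T_k^{i_m}[\eta w_1]=\mathbf{1}_{S_m}$, and $\psi_k([b])=\mathbf{1}_{U_b}$, where $U_b$ is the set of admissible $k$-words beginning with $b$. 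Since $s_0$ and each $s_m$ begins with $b$, the $S_m$ and $S_0$ all lie in $U_b$; they are nonempty since $s_0$ and the $s_m$ are subwords of length at most $k$ of the admissible words $w_2,w_1$. Thus (\ref{bigrelation}) is equivalent to saying that
\[
f(x)\;=\;\gamma\,\mathbf{1}_{U_b}(x)+\sum_{m=1}^{n}\alpha_{i_m}\,\mathbf{1}_{S_m}(x)
\]
takes only the values $0$ and $1$ on admissible $k$-words, with $f^{-1}(1)=S_0$, and the proposition becomes the classification of the configurations $(\gamma;\alpha_{i_1},\dots,\alpha_{i_n};S_0,\dots,S_n)$ meeting this constraint.

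The key tool is that $|s_0|=|s_1|=\delta+1$ is the \emph{maximal} length among $s_0,s_1,\dots,s_n$, since the blocks are nonempty and hence $|s_1|>|s_2|>\cdots>|s_n|$. Recall that two cylinders are either disjoint or nested --- $S\subseteq S'$ exactly when the word defining $S'$ is a prefix of the one defining $S$ --- and that membership of $x$ in a cylinder of length $\ell$ depends only on the length-$\ell$ prefix of $x$. Consequently $f$ is constant on $S_0$ and on $S_1$: evaluating at any $x\in S_0$ gives $1=\gamma+\sum_{m\in J}\alpha_{i_m}$, where $J$ is the set of $m\ge1$ with $s_m$ a prefix of $s_0$, and evaluating on $S_1$ gives a similar equation whose right-hand side is the constant value of $\mathbf{1}_{S_0}$ on $S_1$, namely $0$ unless $s_0=s_1$ (in which case $S_0=S_1$). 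I would also record that $S_m\subsetneq U_b$ always, since $T_k^{i_m}[\eta w_1]\notin\psi_k(V_1)$ --- it is one of the vectors $[\eta w_1],T_k[\eta w_1],\dots,T_k^{d_1-1}[\eta w_1]$, which are linearly independent modulo $\psi_k(V_1)$ --- whereas $\mathbf{1}_{U_b}=\psi_k([b])\in\psi_k(V_1)$.

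Now I would split on whether $s_0=s_1$. If $s_0=s_1$, then $S_0=S_1\subsetneq U_b$; a short argument using that $f$ is $(0,1)$-valued --- the realizable membership patterns are governed by the prefix relations among the finitely many $s_m$, and this excludes $n\ge2$ --- leaves $n=1$, for which the identity reads $(1-\alpha_{i_1})\mathbf{1}_{S_0}=\gamma\,\mathbf{1}_{U_b}$, forcing $\gamma=0$ and $\alpha_{i_1}=1$: this is case (C). If $s_0\ne s_1$, then $S_0\cap S_1=\emptyset$, and combining the $S_0$- and $S_1$-equations with the requirement that $f$ neither exceed $1$ nor drop below $0$ forces every $\alpha_{i_m}\in\{\pm1\}$ and $\gamma\in\{0,1\}$ and pins down $n$; exactly two configurations survive. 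One is case (B): $\gamma=0$, $n=2$, $\alpha_{i_1}=-1$, $\alpha_{i_2}=1$, with $s_2$ a common prefix of the disjoint $s_0,s_1$, so that $S_0\sqcup S_1=S_2$. The other is case (A): $\gamma=1$, every $\alpha_{i_m}=-1$, and $S_0,S_1,\dots,S_n$ pairwise disjoint and covering $U_b$, i.e.\ $\mathbf{1}_{U_b}=\mathbf{1}_{S_0}+\sum_m\mathbf{1}_{S_m}$ with disjoint summands.

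It remains, in case (A), to read off the block structure (iii), and this is where I expect the real work. From $s_m=B_mB_{m+1}\cdots B_n$ together with the fact that $S_0,\dots,S_n$ partition $U_b$, one extracts strong self-overlap relations among the $s_m$: a word that begins like two successive copies of $B_1$ must still land in one of the cells, producing a nontrivial occurrence of $B_1$ inside $B_1B_1$; Lemma \ref{simple} (and the ``commuting strings are common powers'' fact proved there) then forces $B_1$ to be simple and the leading blocks to coincide, $B_1=\cdots=B_{n-1}$, with $B_n$ an initial segment of this common block in which only the last symbol has been changed. The main obstacle is precisely the bookkeeping of the last two paragraphs: enumerating the membership patterns of $S_0,S_1,\dots,S_n$ in $U_b$ compatible with $f$ being $(0,1)$-valued, verifying that they collapse to exactly (A), (B), (C) (and determine $n$), and in case (A) pinpointing the self-overlap that triggers Lemma \ref{simple}. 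Once that is done, the remaining items of (A), (B), (C) are immediate from the cylinder relations already in hand.
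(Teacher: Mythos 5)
Your opening framework — reading the relation~(\ref{bigrelation}) as an identity of indicator functions of cylinder sets, noting $|s_0|=|s_1|>|s_2|>\cdots>|s_n|$, and observing that cylinders are disjoint-or-nested so that the left side is constant on $S_0$ and on $S_1$ — is correct and is essentially the same starting point as the paper, though you reorganize the case split on $s_0=s_1$ vs.\ $s_0\neq s_1$ rather than the paper's $\gamma\neq 0$ vs.\ $\gamma=0$. Your treatment of the $s_0=s_1$ case is fine: $(1-\alpha_{i_1})\mathbf{1}_{S_1}-\sum_{m\geq 2}\alpha_{i_m}\mathbf{1}_{S_m}=\gamma\mathbf{1}_{U_b}$ forces all these coefficients to vanish by linear independence modulo $\psi_k(V_1)$, giving case (C).

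However, there are two genuine gaps. First, the entire $s_0\neq s_1$ analysis — "forces every $\alpha_{i_m}\in\{\pm1\}$ and $\gamma\in\{0,1\}$ ...; exactly two configurations survive" — is asserted rather than argued, and it is not a routine finite enumeration. The paper's argument repeatedly constructs words beginning with $\widehat{s_i}$ (the string $s_i$ with its last symbol switched to something else), whose existence rests precisely on the minimality built into $h_1$ and $h_2$, and traces which cylinder such words must land in. Without this device there is no way to force $s_0$ to be a last-symbol variant of $s_1$, to prove the $S_m$ pairwise disjoint, or to bound $n\leq 2$ when $\gamma=0$; the containment lattice of the $S_m$ alone does not pin these down.

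Second, and more seriously, your sketch of (A)(iii) has the logic of Lemma~\ref{simple} reversed. That lemma says: if $B$ occurs nontrivially in $BB$, then $B$ is \emph{not} simple. You claim that producing "a nontrivial occurrence of $B_1$ inside $B_1B_1$ ... then forces $B_1$ to be simple," which is the opposite implication. The paper's actual route is: show $\overline{B_{n-1}}$ must begin with $s_0$, deduce that $B_1\cdots B_{n-1}$ is tiled by copies of $B_{n-1}$, note that a misaligned tile would produce a nontrivial self-overlap and hence (by Lemma~\ref{simple}) $B_{n-1}=A^c$ with $c\geq 2$, and then derive a contradiction from $c\geq 2$ by examining words beginning $\widehat{A^*}$ and $A\widehat{A^*}$. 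That contradiction is what yields simplicity and exact block alignment; it is not a direct consequence of the lemma in the direction you invoke it.
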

The proof of this proposition is somewhat long and involved.  The basic strategy throughout is to construct words that begin with $b$ and see where they are obliged to fit into the various supports and how they interact with the block structure of $s_1$.  
\begin{proof}
The lengths of the strings $s_0,\dots,s_n$ satisfy $$\delta+1=\ell(s_0) = \ell(s_1)>\ell(s_2)>\cdots>\ell(s_n).$$  It follows that any nontrivial intersection between the supports $S_m$ is in fact {\it containment}.  In particular, if $S_0$ meets any of $S_1,\dots, S_n$, then $S_0$ is contained in the latter support, while if $S_n$ meets one of $S_0,\dots,S_{n-1}$, then $S_n$ must contain it.  Note also that the linear independence of the $T_k^{i_m}[\eta w_1]$ implies that none of the sets $S_1,\dots, S_n$ is the union of others.

To further pin down the behavior of these supports, we break into two cases.  Suppose first that $\gamma\neq 0$ and rearrange (\ref{bigrelation}) as
\begin{equation}\label{bigrelation2}
\gamma\psi_k([b]) = T_k^{d_2}[\eta w_2] -(\alpha_{i_1}T_k^{i_1}[\eta w_1]+\cdots+\alpha_{i_n}T_k^{i_n}[\eta w_1])
\end{equation}
This implies that every admissible $k$-word beginning with $b$ occurs on the right-hand side with equal coefficient.  It also implies that none of the supports $S_0,S_1,\dots,S_n$ can be the union of others, since this would imply a nontrivial linear relation as above with $\gamma=0$.  We claim that the supports $S_1,\dots,S_n$ are pairwise-disjoint.  Indeed, suppose that $S_p\subseteq S_q$ for some $p<q$ and assume that $q$ is the largest such index and that $p$ is the largest such index for this particular $q$.  Since $S_q$ cannot be a union of any of the $S_0,S_1,\dots,S_{q-1}$, there exists $x\in S_q$ that is contained in none of these supports.  It follows by maximality of $q$ that $S_q$ is the unique support containing $x$.  The support $S_p$ is also not a union of other supports, so there exists $y\in S_p$ that is contained in $S_p$ and $S_q$ and no other supports (by maximality of $q$ and $p$).  Comparing the coefficients of $x$ and $y$ in (\ref{bigrelation2}), we see that they differ by $\alpha_{i_q}$, which is a contradiction since they must be equal and $\alpha_{i_q}\neq 0$.   Thus $S_1,\dots, S_n$ are pairwise disjoint.

Now consider any string $\widehat{s_1}$ that is identical to $s_1$ with the exception of ending in any symbol but $a_{h_1+1}$.   Note that such a string exists since $a_{h_1+1}$ was assumed to be the {\it first} symbol that determines the rest of the word.  Any admissible $k$-word that begins with $\widehat{s_1}$ must occur somewhere on the right-hand side of (\ref{bigrelation2}).   We claim that such a word must occur in $S_0$.   It cannot belong to $S_1$ by construction.  Suppose that it belongs to $S_m$ for some $m\geq 2$.   Then the word $s_m$ would be a truncation of $\widehat{s_1}$, and hence of $s_1$, which implies that $S_1\subseteq S_m$ and contradicts the observation above.  The upshot is that $\widehat{s_1}$ must occur in $S_0$, which is to say that $\widehat{s_1}=s_0$ and in particular implies that $$b_{d_2+1}\cdots b_{h_2} = a_{1+d_1-\delta}\cdots a_{d_1},$$ which is a common subword of length $\delta$.  This description of $s_0$ also implies that $S_0$ is disjoint from each of $S_1,\dots,S_n$ since $s_0$ begins with one of $s_m$ if and only if $s_1$ does as well.  Finally, the fact that the collection of admissible $k$-words that begin with $b$ is the disjoint union of the $S_0,S_1,\dots,S_n$ says that the relation (\ref{bigrelation}) has $\gamma=1$ and $\alpha_{i_m}=-1$ for all $m$.
 
Suppose now that $\gamma=0$.  Here, (\ref{bigrelation}) implies that each support $S_m$ is contained in the union of the remaining supports.  In particular, we see that $S_n$ must {\it be} the union of the remaining supports $S_0,\dots, S_{n-1}$, since it contains any support it meets.   In other words, each of the strings $s_0,\dots, s_{n-1}$ begins with $s_n$, and every string that begins with $s_n$ begins with one of $s_0,\dots,s_{n-1}$.  Now we rewrite (\ref{bigrelation}) as $$\alpha_{i_n}T_k^{i_n}[\eta w_1] = T_k^{d_2}[\eta w_2] - (\alpha_{i_1}T_k^{i_1}[\eta w_1]+ \cdots + \alpha_{i_{n-1}}T_k^{i_{n-1}}[\eta w_1])$$ and proceed exactly as in the previous case to conclude that $s_0$ and $s_1$ differ only in the last symbol, that $S_0,S_1,\dots,S_{n-1}$ are pairwise disjoint and collectively exhaust $S_n$, and finally that $\alpha_{i_n}=1$ and $\alpha_{i_1}=\cdots=\alpha_{i_{n-1}}=-1$.  

Having worked out the nature of the various supports, we turn to a more detailed analysis of the blocks $B_1,\dots,B_n$.  Let us return to the $\gamma\neq 0$ situation.  Consider the word $\overline{B_{n-1}}=B_{n-1}B_{n-1}\cdots$ where we repeat until the length is at least $\delta+1$.  This word begins with $b$, and therefore begins with one of $s_0,s_1,\dots, s_n$.  It cannot begin with $s_n=B_n$, since then $B_n$ would a truncated power of $B_{n-1}$, and hence $s_{n-1}=B_{n-1}B_n$ would also begin with $s_n=B_n$, which would imply $S_{n-1}\subseteq S_n$, contrary to the above work.  If $\overline{B_{n-1}}$ were to begin with one of $s_1,\dots, s_{n-1}$, then the string $s_{n-1}=B_{n-1}B_n$ would occur in $\overline{B_{n-1}}$.  The periodicity of this string would then imply again that $s_n=B_n$ occurs at the beginning of $s_{n-1}=B_{n-1}B_n$, which yields the same contradiction.  We conclude that $\overline{B_{n-1}}$ begins with $s_0$.  Note that, since $s_0$ differs from $s_1$ only at the last symbol, this implies in particular that the string $B_1\cdots B_{n-1}$ is a truncated power of $B_{n-1}$.  The subtlety is that these copies of $B_{n-1}$ do not {\it a priori} line up with the blocks $B_1, B_2, \dots, B_{n-1}$.  
	
The next step is to establish that each of $B_1, \dots, B_{n-1}$ is equal to a power of $B_{n-1}$.  Consider a word beginning $\widehat{s_2}$, which is $s_2$ with the last symbol switched as in $\widehat{s_1}$ above.  Such a word clearly cannot begin $s_2$.  It also cannot begin with any of the words $s_3,\dots,s_n$ since these words are shorter and their occurrence at the beginning of $\widehat{s_2}$ would imply their occurrence at the beginning of $s_2$, which contradicts the disjointness of the supports $S_1,\dots,S_n$.  Thus a word that begins with $\widehat{s_2}$ must begin with either $s_0$ or $s_1$.  Since these words are longer, this means that $\widehat{s_2}$ occurs at the beginning of $s_0$ or $s_1$, and in particular that $\widehat{s_2}$, and hence $s_2$, begins with $B_{n-1}$. The same argument applied to $\widehat{s_3}$ (with evident notation) shows that it must occur at the beginning of $s_0, s_1$, or $s_2$, and hence $s_3$ must also begin with $B_{n-1}$.  Proceeding in this fashion, we conclude that every one of $s_1,\dots,s_{n-1}$ begins with $B_{n-1}$.  

We are still short of the conclusion that each $B_i$ is a power of $B_{n-1}$, owing to the possibility of nontrivial occurrences of $B_{n-1}$ in the string $B_1B_2\cdots B_{n-1}$.  For example, the copy of $B_{n-1}$ that must begin at $B_2$ in this string by the previous paragraph might begin in the middle of a copy of $B_{n-1}$ in $\overline{B_{n-1}}$.  But the fact that the entire string $B_1B_2\cdots B_{n-1}$ is tiled over by copies of $B_{n-1}$ means that such an overlap has $B_{n-1}$ occurring in $B_{n-1}B_{n-1}$ nontrivially.  Lemma \ref{simple} implies that $B_{n-1}=A^c$ for some $c\geq 2$ and some string $A$, which we may take to be simple.  In particular, $B_{n-1}$ is not simple, from which we derive a contradiction as follows.  
The word $\overline{A}=\overline{B_{n-1}}$ begins with $s_0=B_1\cdots B_{n-1}\widehat{s_n}$, so the simplicity of $A$ means that each block in $B_1\cdots B_{n-1}$ is a power of $A$, and $\widehat{s_n}$ is a truncated power of $A$, say $\widehat{s_n}=A^eA^*$ where $A^*$ is a truncation of $A$.  Switching the last symbol back to $a_{h_2+1}$, we see that $B_n=A^e\widehat{A^*}$ with evident notation.   Consider admissible words beginning with $\widehat{A^*}$.  Such a word cannot begin with any of $s_0, s_1, \dots, s_{n-1}$ as these all begin with $A$.  Thus it must begin with $s_n=B_n$, which implies that $e=0$.    Finally, consider an admissible word beginning $A\widehat{A^*}$.  Such a word cannot begin with any of $s_0,s_1,\dots,s_{n-1}$ since these all begin with at least two copies of $A$ (since $B_{n-1}$ has $c\geq 2$ copies of $A$).  Such a word clearly cannot begin with $s_n=B_n=\widehat{A^*}$ either, which gives us a contradiction.

We conclude that $B_{n-1}$ is simple and that such self-overlaps of $B_{n-1}$ do not occur, which is to say that each block $B_1, B_2, \dots, B_{n-1}$ is a power of $B_{n-1}$.    Let $c_i$ be the positive integer with $B_i = B_{n-1}^{c_i}$.  The last step is to show that $c_i=1$ for all $i$.  We have $\widehat{s_n} = B_{n-1}^eB_{n-1}^*$ for some $e$, where $B_{n-1}^*$ is a truncation of $B_{n-1}$.  Switching the last symbol we get $B_n=B_{n-1}^e\widehat{B_{n-1}^*}$ with evident notation.  We have 
\begin{eqnarray*}
	s_0 &=& B_{n-1}^{c_1+\cdots+c_{n-2}+1+e}B_{n-1}^*\\
	s_1 &=& B_{n-1}^{c_1+\cdots+c_{n-2}+1+e}\widehat{B_{n-1}^*} \\
	s_2 &=& B_{n-1}^{c_2+\cdots+c_{n-2}+1+e}\widehat{B_{n-1}^*} \\
	s_3 &=& B_{n-1}^{c_3+\cdots+c_{n-2}+1+e}\widehat{B_{n-1}^*} \\
	 & \vdots & \\
	s_{n-1} &=& B_{n-1}^{1+e}\widehat{B_{n-1}^*}\\
	s_n &=& B_{n-1}^{e}\widehat{B_{n-1}^*}
\end{eqnarray*}
and as usual any admissible $k$-word beginning with $b$ must begin with one of these strings.  Looking for words beginning $\widehat{B_{n-1}^*}$ here immediately yields $e=0$.  Next, we look for words beginning $B_{n-1}^{c_1+\cdots+c_{n-2}}\widehat{B_{n-1}^*}$.  Such words evidently cannot begin with $s_0$ or $s_1$.  They also cannot begin with $s_3,\dots s_n$ since these words are strictly shorter than $B_{n-1}^{c_1+\cdots+c_{n-2}}\widehat{B_{n-1}^*}$, which would imply that $B_{n-1}^{c_1+\cdots+c_{n-2}}B_{n-1}^*$ begins with one of $s_3,\dots,s_n$, again contradicting the disjointness of the supports $S_0,\dots, S_n$.  Thus such words must begin with $s_2$, which implies $c_1=1$.  Looking for words beginning $B_{n-1}^{c_2+\cdots+c_{n-2}}\widehat{B_{n-1}^*}$ implies similarly that $c_2=1$, and proceeding in this manner we conclude that $c_i=1$ for all $i$.  This completes the analysis of the $\gamma\neq 0$ situation, establishing Case (A) of our proposition as holding there.

Suppose again that $\gamma=0$, and suppose that $n\geq 3$.  Recall that we have established that all admissible words of length at least $\delta+1$ that begin with $B_n$ must begin with one of $s_0,s_1,\dots, s_{n-1}$.  Reasoning as with $\overline{B_{n-1}}$ in the case $\gamma\neq 0$, we see that the word $\overline{B_{n-2}}$ must begin with $s_0$.  Once again, the next task is to show that the words $s_1,s_2,\dots,s_{n-2}$ all begin with $B_{n-2}$ by successively considering the words $\widehat{s_2}, \widehat{s_3}, \dots, \widehat{s_{n-2}}$.  Reasoning similarly, we see that any word beginning $\widehat{s_{n-1}}$ must begin with one of $s_0,s_1,\dots,s_{n-2}$, say $s_i$.  If $\widehat{s_{n-1}}$ has length at most $B_{n-2}$, then it is shorter than $s_i$ and the fact that any word beginning $\widehat{s_{n-2}}$ must begin $s_i$ means that the remainder of $s_i$ is determined, contrary to the fact that the last symbol is not determined, by minimality of $h_1$.  Thus $\widehat{s_{n-1}}$ is longer than $B_{n-2}$ and must also begin with it, which implies that $s_{n-1}$ begins with $B_{n-2}$ as well.  

Write $B_{n-2} =A^c$ where $A$ is simple.  Reasoning as in the $\gamma\neq 0$ situation, we again conclude that each $B_1,\dots, B_{n-2}$ is a power of $A$.  Observe that the word $s_{n-1}$ begins with $A^c$, the word $s_{n-2}$ begins with $A^{2c}$, and the words $s_i$ for $i<n-2$ begin with $A^{2c+1}$.  We consider two cases.  Suppose first that $\ell(B_n)\leq \ell(A)$.  
Since $s_0=\widehat{s_1}$ occurs at the beginning of $\overline{B_{n-2}} = \overline{A}$, the last two symbols $b_{h_2}b_{h_2+1}$ occur either in $A$ somewhere, or $b_{h_2}$ occurs at the end of $A$ and $b_{h_2+1}$ occurs at the beginning.  In either case, we can form $A\widehat{A^*}$ by switching the $b_{h_2  +1}$ in the second copy of $A$ and truncating at this point, leaving everything before it alone.  Since $B_n$ occurs at the beginning of $s_{n-2}=B_{n-2}B_{n-1}B_n$ and has length at most $\ell(A)$, it must occur at the beginning of $A\widehat{A^*}$.  Thus a word beginning $A\widehat{A^*}$ must begin with one of $s_0,s_1,\dots,s_{n-1}$.  If $c\geq 2$ then this is impossible since each of these words begins with $A^2$.  Thus $c=1$ and we conclude that a word beginning $A\widehat{A^*}$ must begin $s_{n-1}$.   Write $A=B_nA_1$ and note that $$s_{n-1}=B_{n-1}B_n = B_nA_1B_n$$ and $$s_{n-2} = B_{n-2}B_{n-1}B_n = B_nA_1B_nA_1B_n$$  Thus $s_{n-1}$ occurs at the beginning of $s_{n-2}$, contrary to the disjointness of the supports $S_{n-1}$ and $S_{n-2}$.  

Finally, suppose that $\ell(B_n)>\ell(A)$.  The fact that $s_0=B_1\cdots B_{n-1}\widehat{B_n}$ is tiled over by copies of $A$ and each of $B_1,\dots,B_{n-2}$ is a power of $A$ implies that $B_{n-1}\widehat{B_n}$ is tiled by copies of $A$.  We know that $B_n$ occurs at the beginning of $\overline{A}$ since it occurs at the beginning of $s_0$.  Now the fact that $\ell(B_n)>\ell(A)$ means that a copy of $A$ occurs at the beginning of $B_n$ as well as $\widehat{B_n}$ since these differ only in the last symbol.  It follows that $B_{n-1}=A^e$ for some $e$, since otherwise this copy of $A$ would meet an $AA$ in the tiling of $B_{n-1}\widehat{B_n}$ nontrivially, contrary to Lemma \ref{simple}.  But this implies that $\widehat{B_n}$ is tiled over by copies of $A$, which is ridiculous since we know that $B_n$ is as well.  Having exhausted all other options (as well the reader) we conclude that $n\leq 2$ in the $\gamma=0$ setting, which completes the proof of the proposition.
\end{proof}

\begin{coro}\label{smalloverlap}
	Suppose that $d_2<h_2$ and we are in Case (A) of Proposition \ref{bigsummary} .  Then $c^{11}_{d_1-i}=0$ for $0<i<\delta$.  If the words are ordered so that $h_1\leq h_2$, then we have $c^{21}_{d_1-i}=0$ for $i<\delta-r+1$.  
\end{coro}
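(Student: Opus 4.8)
The plan is to translate both equalities into statements about the periodicity of $w_1$ and then play them off the block structure recorded in Proposition~\ref{bigsummary}(A). As in Section~\ref{oneword}, $c^{11}_{d_1-i}$ is the indicator of $[\beta w_1]$ lying in the support of $T_k^{i-1}[\eta w_1]$, i.e.\ of $a_0\cdots a_{k-i}=a_i\cdots a_k$, which says exactly that $w_1$ has period $i$; likewise $c^{21}_{d_1-i}=1$ iff $b_0\cdots b_{k-i}=a_i\cdots a_k$. So for the first claim I would suppose $w_1$ has some period $i$ with $0<i<\delta$ and derive a contradiction, and for the second I would manufacture such a period.

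For the first claim, a period $i<\delta$ of $w_1$ restricts to a period $i$ of the length-$(\delta+1)$ subword $s_1=a_{d_1-\delta+1}\cdots a_{d_1+1}$. By Proposition~\ref{bigsummary}(A)(iii) write $s_1=B^{\,n-1}B_n$ with $B:=B_1=\cdots=B_{n-1}$ simple, $\ell(B_n)=:m\le\ell(B)$, and $B_n$ equal to the length-$m$ prefix of $B$ except in its last symbol; hence the length-$\delta$ prefix $u$ of $s_1$ is a prefix of the periodic word $BBB\cdots$ and in particular has period $\ell(B)$. Now split on $i$. If $\ell(B)\mid i$, then $s_1[\delta+1]=s_1[\delta+1-i]$ and locating the index $\delta+1-i$ inside the block decomposition of $s_1$ gives $s_1[\delta+1]=B[m]$, contradicting the defining property of $B_n$. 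If $\ell(B)\nmid i$ and $\ell(B)+i\le\delta$, then $u$ carries the two periods $i$ and $\ell(B)$ with $\ell(B)+i\le\ell(u)$, so Fine--Wilf forces the period $\gcd(i,\ell(B))<\ell(B)$ on $u$; since $B$ is the length-$\ell(B)$ prefix of $u$, this exhibits $B$ as a proper power, contrary to Lemma~\ref{simple}. The remaining case $\ell(B)\nmid i$ with $i>\delta-\ell(B)$ — which forces $\ell(B)$ large and hence $n$ small — is the one I expect to cost the most: Fine--Wilf is unavailable, and I would instead return to the disjointness and exhaustion of (A)(i), running an argument in the style of the proof of Proposition~\ref{bigsummary} (feeding suitably $\widehat{\ \cdot\ }$-modified truncations into the supports $S_0,\dots,S_n$) to see that the extra period cannot coexist with the exact partition of the admissible $k$-words beginning with $b$. (The degenerate possibility $n=1$, where (A)(iii) is vacuous, would be dispatched separately, again from (A)(i).)

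For the second claim, suppose $c^{21}_{d_1-i}=1$ with $1\le i\le\delta-r$, i.e.\ $b_0\cdots b_{k-i}=a_i\cdots a_k$. Combining this with the identity $b_{d_2+1}\cdots b_{h_2}=a_{d_1-\delta+1}\cdots a_{d_1}$ extracted in the proof of Proposition~\ref{bigsummary}, we obtain an occurrence of (a prefix of) the length-$\delta$ word $a_{d_1-\delta+1}\cdots a_{d_1}$ inside $w_1$ starting at position $i+d_2+1$, which is offset from its occurrence at $d_1-\delta+1$ by $g:=i+h_2-h_1$. After ordering the words so that $h_1\le h_2$, Lemma~\ref{hgrows} gives $0\le h_2-h_1\le r-1$, so with $1\le i\le\delta-r$ we get $1\le g\le\delta-1$. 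The two (possibly overlapping) occurrences therefore force the period $g$ on a window of $w_1$ that contains all of $s_1$, so $s_1$ acquires the period $g<\delta$; this is impossible by the same contradiction as in the first claim, whose mechanism only used that $s_1$ has a period $<\delta$. The subtlety to watch for here is that the string $b_0\cdots b_{k-i}$ can run past position $k$, so the second occurrence need not lie entirely inside $w_1$; but the portion that does lie inside $w_1$ has length $k-i-d_2$, and the inequalities $i\le\delta-r$ and $h_1\le k-1$ make this long enough to recover the period $g$ on $s_1$ (in particular to reach the symbol $a_{d_1+1}$ using only indices present in $w_1$), which is all that is needed.
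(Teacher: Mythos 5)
Your opening reduction is the same as the paper's: translating $c^{11}_{d_1-i}=1$ and $c^{21}_{d_1-i}=1$ into self-overlap conditions on $s_1$, and using $\widehat{s_1}=s_0$ together with Lemma~\ref{hgrows} to convert the $c^{21}$ condition into one about $s_1$ with the offset $i+(h_2-h_1)$, exactly recovers the paper's computation $d_2+1=i_1+(h_2-h_1)+1$ and its conclusion that in either case $s_1$ overlaps itself in at least its last two symbols. Your handling of the $\ell(B)\mid i$ subcase and the Fine--Wilf argument when $\ell(B)+i\le\delta$ are both correct and are reasonable alternatives to the paper's mechanism.

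The gap is exactly where you flag it. You leave open the subcase $\ell(B)\nmid i$ with $i>\delta-\ell(B)$, and the $n=1$ case, with only a sketch that one should ``feed suitably $\widehat{\ \cdot\ }$-modified truncations into the supports.'' That sketch is, in fact, the whole of the paper's proof of this corollary, and it is the part that actually has to be carried out: letting $s\widehat{s'}$ be the last two symbols of $B_n$ and $ss'$ the corresponding pair in $B$, the short self-overlap of $s_1$ forces $s\widehat{s'}$ to occur inside $B$ (simplicity of $B$ is used here); switching that occurrence back to $ss'$ and truncating produces a word beginning with $b$ that begins with neither $B$ nor $B_n$ (splitting on whether the occurrence is before or after $ss'$), contradicting the partition in (A)(i). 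That single construction disposes of every subcase uniformly, with no Fine--Wilf, no divisibility split, and no separate treatment of large $\ell(B)$. So the proposal is correct where it is complete, but it substitutes a longer case analysis for the paper's one construction and still owes the argument for precisely the cases the paper's construction is designed to handle.
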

\begin{proof}
	Let $B$ denote the block $B_1=B_2=\cdots=B_{n-1}$ of Case (A) of Proposition \ref{bigsummary}, so $s_1=B^{n-1}B_n$.  Recall that $B$ is simple and that $B_n$ is a truncation of $B$ with the last symbol switched.  Let $s\widehat{s'}$ denote the last two symbols in $B_n$ with corresponding $ss'$ in $B$.  
	
	Since $\ell(s_1)=\delta+1$, any $0<i\leq\delta$ with $c^{11}_{d_1-i}=1$ dictates a self-overlap of $s_1$ in $\delta+1-i$ symbols.  Since $h_1\leq h_2$, we have $$d_2+1 = d_1-(h_2-d_2) +(h_2-h_1)+1 = i_1+(h_2-h_1)+1\geq i_1+1.$$  This is to say that the subword $s_0$ of $w_2$ begins $h_2-h_1$ units to the right of the subword $s_1$ of $w_1$.  Since $s_1$ and $s_0$ agree outside of the last symbol, we see that $c^{21}_{d_1-i}=1$ dictates a self-overlap of $s_1$ in $$\delta+1-(h_2-h_1)-i \geq \delta+1-(r-1)-i$$ symbols, where we have used Lemma \ref{hgrows}.    Thus if we have either $c^{11}_{d_1-i}=1$ with $0<i<\delta$ or $c^{21}_{d_1-i}=1$ with $i<\delta-r+1$, then $s_1$ overlaps itself in at least the last two symbols.  Since $B$ is simple, such an overlap must involve an overlap of $B$ with the end of $B_n$, which is to say that $s\widehat{s'}$ must occur in $B$.  Now switch this occurrence of $s\widehat{s'}$ in $B$ to $ss'$, leaving anything before it alone (and throwing away anything after it).  Any word $B'$ beginning in this fashion begins with $b$, but we claim it cannot begin with either $B$ or $B_n$, contrary to Proposition \ref{bigsummary}.  It clearly cannot begin with $B$ by constructions.   If the noted $s\widehat{s'}$ occurs before $ss'$ in $B$, then $B'$ cannot begin with $B_n$ since $B_n$ agrees with $B$ up to $ss'$.  On the other hand, if the noted $s\widehat{s'}$ occurs after $ss'$ in $B$, then $B'$ cannot begin with $B_n$ since $B'$ agrees with $B$ up to $ss'$.
\end{proof}

By Proposition \ref{bigsummary}, we can write the matrix of $T_k$ in a uniform way as 
\begin{equation}
	\left[\begin{array}{c|ccccc|ccccc}
		T & \0 & \0 &  \cdots & \0 & \mathbf{e}_{a_{d_1+1}} & \0 & \0 & \cdots & \0 & \gamma\mathbf{e}_{b_{d_2+1}}\\ \hline
		\0 & 0 & 0 & \cdots & 0 & 0 & 			0 &  & \cdots &  0&  \\
		\0 & 1 & 0 &  \cdots & 0 & 0 &     		 & & &  &  \\
		\0 & 0 & 1 &  & 0& 0 &   				\vdots & &  & \vdots&\mathbf{f} \\
		\vdots & \vdots &  & \ddots & & &	 		& & & & \\
		\0 & 0 & 0 &  & 1 & 0 &     				0 & & \cdots & 0&\\ \hline
		
		\0 & 0 &&\cdots&& 0&			0 & 0 & \cdots & 0 & 0 \\
		\0 & 	&&&&&			1 & 0 &  \cdots & 0 & 0 \\
		\0 & \vdots &&&&\vdots&			0 & 1 &  & 0 & 0     \\
		\vdots & 	&&&&&		\vdots &  & \ddots & & \\
		\0 & 0 &&\cdots&&0&			0 & 0 &  & 1 & 0 
		\end{array}\right]
\end{equation}
where $\mathbf{f}$ is the column vector with $\alpha_{i_m}$ at position $i_m$ and $0$s elsewhere.  In case $d_2=h_2$, we have $\gamma=1$ and $\mathbf{f}=0$.

The next task is to determine the matrix of $E_{w_1}+E_{w_2}$.  Generalizing the one-word case, this matrix tracks overlaps between $w_1$ and $w_2$.  We adopt the following notation: $c_i^{11}$ will track self-overlaps of $w_1$, $c_i^{22}$ will track self-overlaps of $w_2$, $c_i^{21}$ will track overlaps of the beginning $w_2$ with the end of $w_1$, and $c_i^{12}$ will track overlaps of the beginning of $w_1$ with the end of $w_2$.   Explicitly, we have, for $0\leq i\leq d_1$ and $0\leq j\leq d_2$, \\\noindent
$c^{11}_{d_1-i} = 1$ if $$a_0a_1\cdots a_{k-i} = a_{i}\cdots a_k,$$ $c^{21}_{d_1-i}=1$ if $$b_0\cdots b_{k-i} = a_{i}\cdots a_k,$$  $c^{12}_{d_2-j}=1$ if $$a_0\cdots a_{k-j} = b_{j}\cdots b_k,$$  $c^{22}_{d_2-j} = 1$ if $$b_0\cdots b_{k-j} = b_{j}\cdots b_k$$ and all other coefficients are $0$.  The matrix of $E_{w_1}+E_{w_2}$ with respect to the basis (\ref{basis}) is thus
\begin{equation}
	\left[\begin{array}{c|ccccc|ccccc}
		\0 & \0 & \0 &  \cdots & \0 & \0 										& \0 & \0 & \cdots & \0 & \0 \\ \hline
		\mathbf{e}_{a_0}^T & c^{11}_{d_1-1} & c^{11}_{d_1-2} & \cdots & c^{11}_1 & c^{11}_0 & 			c^{12}_{d_2-1} & c^{12}_{d_2-2} & \cdots & c^{12}_1 & c^{12}_0 \\
		\0 & 0 & 0 &  \cdots & 0 & 0 &     		 							 0 & 0 & \cdots & 0 & 0 \\
		 \vdots &  &  & \vdots & & &	 								& & \vdots & & \\
		\0 & 0 & 0 & \cdots & 0 & 0 &     											0 & & \cdots & & 0\\ \hline
		
		\mathbf{e}_{b_0}^T & c^{21}_{d_1-1} & c^{21}_{d_1-2} & \cdots & c^{21}_1 & c^{21}_0 & 			c^{22}_{d_2-1} & c^{22}_{d_2-2} & \cdots & c^{22}_1 & c^{22}_0 \\
		\0 & 0 & 0 &  \cdots & 0 & 0 &     		 							 0 & 0 & \cdots & 0 & 0 \\
		 \vdots &  &  & \vdots & & &	 								& & \vdots & & \\
		\0 & 0 & 0 & \cdots & 0 & 0 &     											0 & & \cdots & & 0\\ 
		\end{array}\right]
\end{equation}

Subtracting, we see that the matrix of $T_k\langle w_1,w_2\rangle-t$ is 
\begin{equation*}
	\left[\begin{array}{c|ccccc|ccccc}
		T-t& \0 & \0 &  \cdots & \0 & \mathbf{e}_{a_{d_1+1}} & \0 & \0 & \cdots & \0 & \gamma\mathbf{e}_{b_{d_2+1}}\\ \hline
		-\mathbf{e}_{a_0}^T & -c^{11}_{d_1-1}-t & -c^{11}_{d_1-2} & \cdots & -c^{11}_1 & -c^{11}_0 & 	-c^{12}_{d_2-1} & -c^{12}_{d_2-2} & \cdots & -c^{12}_1 & -c^{12}_0 \\
		\0 & 1 & -t &  \cdots & 0 & 0 &     		 &  &  &  &  \\
		\0 & 0 & 1 &  & 0& 0 &   				\0 & \0 & \cdots & \0 & \mathbf{f} \\
		\vdots & \vdots &  & \ddots & & &	 		& & & & \\
		\0 & 0 & 0 &  & 1 & -t &     				 & &  & & \\ \hline
		
		-\mathbf{e}_{b_0}^T & -c^{21}_{d_1-1} & -c^{21}_{d_1-2} & \cdots & -c^{21}_1 & -c^{21}_0 & 	-c^{22}_{d_2-1}-t & -c^{22}_{d_2-2} & \cdots & -c^{22}_1 & -c^{22}_0 \\
		\0 & 0&0&\cdots&0&0&		1 & -t &  \cdots & 0 & 0 \\
		\0 & \vdots &&&&\vdots&			0 & 1 &  & 0 & 0     \\
		\vdots & 	&&&&&		\vdots &  & \ddots & & \\
		\0 & 0 &&\cdots&&0&			0 & 0 &  & 1 & -t 
		\end{array}\right]
\end{equation*}
Using elementary row and column operations and collapsing some determinant one blocks, we find that the determinant of this matrix coincides (up to a sign) with that of
\begin{equation}\label{matrix}
\left[\begin{array}{c|c|c}
	T-t & \mathbf{e}_{a_{d_1+1}} & \gamma\mathbf{e}_{b_{d_2+1}} \\ \hline
	-\mathbf{e}^T_{a_0} & -p_{11}(t) & -p_{12}(t)+f_{11}(t) \\ \hline
	-\mathbf{e}^T_{b_0} & -p_{21}(t) & -p_{22}(t)+f_{21}(t)
\end{array}\right]
\end{equation}
where 
\begin{multicols}{2}
\noindent\begin{eqnarray*}
p_{11}(t) &=& \sum_{i=0}^{d_1} c_{d_1-i}^{11}t^{d_1-i} \\
p_{22}(t) &=& \sum_{j=0}^{d_2} c_{d_2-j}^{22}t^{d_2-j} 
\end{eqnarray*}
\begin{eqnarray*}
p_{21}(t) &=& \sum_{i=1}^{d_1} c_{d_1-i}^{21}t^{d_1-i} \\
p_{12}(t) &=& \sum_{j=1}^{d_2} c_{d_2-j}^{12}t^{d_2-j} 
\end{eqnarray*}
\end{multicols}
\noindent are the {\it correlation polynomials}.  The polynomials $f_{11}$ and $f_{21}$ are slightly more complicated to write down.  For each index $i\in \{0,\dots, d_1-1\}$, consider the truncated and shifted correlation polynomials $$q_{11}^{i}(t) = t^{i}+c^{11}_{d_1-1}t^{i-1}+\cdots + c^{11}_{d_1-i}$$ and $$q_{21}^{i}(t) = c^{21}_{d_1-1}t^{i-1}+\cdots + c^{21}_{d_1-i}$$ where we set $q_{21}^0(t)=0$ by convention.   Then we have $$f_*(t) = \sum_{m=0}^n\alpha_{i_m}q_*^{i_m}(t).$$

\section{Two Words: Bounding the Correlation Polynomials}
	For the moment, let $w=a_0a_1\cdots a_k$ be a general admissible word with fundamental period $p$.  Let $w'=b_0b_1\cdots b_k$ be another admissible word and let $i_0$ denote the smallest non-negative integer with 
	\begin{equation}\label{overlap1}
	b_0\cdots b_{k-i_0}= a_{i_0}\cdots a_k
	\end{equation}
	 	In practice $w$ and $w'$ will be taken from the forbidden words $\{w_1,w_2\}$.	In particular, we may have $w=w'=w_1$, for example, in which case $i_0=0$.  
\begin{lemm}
	Let $i$ be a positive integer with 
	\begin{equation}\label{overlap2}
	b_0\cdots b_{k-i}= a_{i}\cdots a_k
	\end{equation}
	 Then either $p|(i-i_0)$ or $i\geq k+2-p$.
\end{lemm}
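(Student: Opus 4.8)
The plan is to reduce the statement to a \emph{self}-overlap of $w$, namely a self-overlap at shift $q := i - i_0$, at which point Lemma \ref{Bcopy} finishes the job. First I would dispose of two easy cases: if $i = i_0$ then $p \mid 0$ trivially (and in fact $i \geq i_0$ always, since $i_0$ is the smallest non-negative solution of the condition (\ref{overlap1})), while if $i \geq k+2-p$ we are already in one of the two permitted alternatives. So from now on assume $i_0 < i \leq k+1-p$; in particular the matched region $a_i\cdots a_k = b_0\cdots b_{k-i}$ of (\ref{overlap2}) has length $k-i+1 \geq p$.

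Since $i > i_0$, the word $b_0\cdots b_{k-i}$ is a prefix of $b_0\cdots b_{k-i_0}$, so comparing (\ref{overlap1}) and (\ref{overlap2}) on these prefixes of $w'$ gives $a_{i_0+t} = a_{i+t}$ for $0 \leq t \leq k-i$; equivalently $a_s = a_{s+q}$ for every $s$ in the interval $[i_0,\, k-q]$. I then want to upgrade this to $a_s = a_{s+q}$ for \emph{all} $0 \leq s \leq k-q$, the only remaining values being $s < i_0$. For these I would propagate using the relation $a_s = a_{s+p}$ for $0 \leq s \leq k-p$ (the defining property of the fundamental period $p$ of $w$). The crucial observation is that $[i_0,\, k-q]$ has $k-i+1 \geq p$ elements, hence contains a complete residue system modulo $p$: given $s < i_0$, choose $s' \in [i_0,\, k-q]$ with $s' \equiv s \pmod p$ (necessarily $s' > s$), and chain $a_s = a_{s'}$ (raising by multiples of $p$), then $a_{s'} = a_{s'+q}$, then $a_{s'+q} = a_{s+q}$ (lowering by multiples of $p$). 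This is a Fine--Wilf-style bookkeeping argument; verifying that every index occurring in the chains stays in the legal range $[0,k]$ (using $s+q \leq i-1 \leq k-p$ and $s' \leq k-q$) is the fussiest point, and I expect it to be the only genuine obstacle.

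Having produced the self-overlap $a_0\cdots a_{k-q} = a_q\cdots a_k$, I would finish as in the one-word lemma following Lemma \ref{Bcopy}: since $q \leq i \leq k+1-p$ we have $q+p-1 \leq k$, so reading off the first $p$ symbols of each side yields $a_q\cdots a_{q+p-1} = a_0\cdots a_{p-1} = B$, that is, a complete copy of the fundamental block $B$ occurs in $w$ beginning at $a_q$. Lemma \ref{Bcopy} then forces $p \mid q = i - i_0$, which is the remaining alternative. (In the case $i_0 = 0$ --- for instance when $w = w' = w_1$, so that (\ref{overlap1}) records only the trivial full overlap --- the matched region is itself a self-overlap of $w$ and the whole argument collapses to the one-word statement with $q = i$.)
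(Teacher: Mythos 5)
Your proof is correct and follows the same line as the paper's: combine the two overlaps into a self-overlap at shift $q = i - i_0$ and reduce to Lemma~\ref{Bcopy}. The paper is terser---it observes that the self-overlap of $\eta^{i_0}w$ retains a full period and invokes Lemma~\ref{Bcopy} directly---whereas you first promote the partial overlap to a genuine period-$q$ relation on all of $w$ via the residue-class chaining, which makes the appeal to Lemma~\ref{Bcopy} (stated for $w$, not a truncation) airtight.
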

\begin{proof}
	Combining (\ref{overlap1}) and (\ref{overlap2}) forces a self-overlap of the truncated word $\eta^{i_0}w$ in $k-i+1$ symbols via a shift of length $i-i_0$.  If $k-i+1\geq p$, then this retains a full period of $w$ and Lemma \ref{Bcopy} implies $p|(i-i_0)$.
\end{proof}
Let $D\leq k-1$ be a positive integer (which in practice will depend on both $w$ and $w'$) and consider the polynomial 
\begin{equation}\label{polyform1}
P(t) = \sum_{i=0}^D c_{D-i}t^{D-i}
\end{equation}
 where $c_{D-i} = 1$ if (\ref{overlap2}) holds and $0$ otherwise.  By the lemma above, the terms in this polynomial are either $t^{D-(i_0+pm)}$ for some $m\leq M:=\lfloor\frac{D-i_0}{p}\rfloor$ or $t^{D-i}$ with $$D-i\leq D-(k+2-p)\leq k-1-(k+2-p) = p-3.$$  As a result, we have 
\begin{equation}\label{polyform2}
P(t) = t^{D-i_0}+t^{D-(i_0+p)}+\cdots +t^{D-(i_0+Mp)}+\psi(t) = t^{D-i_0}\frac{t^{p} - t^{-Mp}}{t^p-1}+\psi(t)
\end{equation}
where $\psi(t)$ has degree at most $p-3$.

In what follows we fix a number $\rho>1$ and take $t$ to be a complex number with $|t|\geq\rho$ and seek bounds on these polynomials.  Specifically, we will proceed as in the one-word case and use the expressions in (\ref{polyform2}) to write $P(t)$ as the sum $$P(t) = D(t)+E(t)$$ of a dominant term and an error term.  In the ``large period'' cases, we will simply take $D(t) = t^{D-i_0}$ and let $E(t)$ be the remaining terms.  Reasoning as in Section \ref{oneword}, we have
\begin{equation}\label{errorlargep}
|D(t)|\leq |t|^D\ \ \ \ \ \mbox{and}\ \ \ \ \ |E(t)|\leq \frac{1}{\rho-1}|t|^{D-p+1}
\end{equation}
In the ``small period'' cases, we will take $$D(t) =  t^{D-i_0}\frac{t^{p} - t^{-Mp}}{t^p-1}$$ and let $E(t)=\psi(t)$.   Here we have 
\begin{equation}\label{errorsmallp}
|D(t)|\leq |t|^D\frac{\rho+1}{\rho-1}\ \ \ \ \ \mbox{and}\ \ \ \ \ |E(t)|\leq \frac{1}{\rho-1}|t|^{p-2}
\end{equation}

We now return to the situation of forbidding the pair of words $w_1=a_0\cdots a_k$ and $w_2=b_0\cdots b_k$.  
In the notation of Lemma \ref{detlemma}, write summand of the characteristic polynomial of the perturbed subshift corresponding to $S=T=\emptyset$ as $\chi_T(t)\Delta(t)$.  In other words, let $\Delta(t)$ denote the determinant of the bottom-right $2\times 2$ submatrix of (\ref{matrix}).   A lower bound on $\Delta(t)$ will be critical to our bound on the perturbed eigenvalue.  
\begin{prop}\label{realdeltabound}
Let $\rho>1$.  There exists a positive constant $D$ such that $$|\Delta(t)|\geq D|t|^{d_1+d_2}$$ holds on $|t|\geq \rho$ for all pairs of words with $d_1$ and $d_2$ sufficiently large.
\end{prop}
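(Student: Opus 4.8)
The plan is to expand the $2\times 2$ determinant, isolate the one genuinely dominant summand, and dispose of the rest using the periodic structure of the correlation polynomials. Expanding,
\begin{equation*}
\Delta(t)=p_{11}(t)p_{22}(t)-p_{11}(t)f_{21}(t)+f_{11}(t)p_{21}(t)-p_{12}(t)p_{21}(t).
\end{equation*}
The first summand is the product of the one-word correlation polynomials of $w_1$ and of $w_2$, so two applications of Proposition \ref{pgrows} furnish a positive constant $D_0$ with $|p_{11}(t)p_{22}(t)|\ge D_0^2|t|^{d_1+d_2}$ on $|t|\ge\rho$ once $d_1,d_2$ are large. It then suffices to show the remaining three summands are $o(|t|^{d_1+d_2})$; the subtlety is that crude coefficient estimates do not suffice (the constant $(\rho-1)^{-2}$ governing $|p_{12}p_{21}|$ need not beat $D_0^2$), so the structure of Section \ref{oneword} and of (\ref{polyform2}) must be used.

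For the two $f$-terms I would use an exact cancellation of leading parts. Writing $q_{11}^{i}$ and $q_{21}^{i}$ for the polynomials formed from the top $i+1$ coefficients of $p_{11}$ and of $p_{21}$, one has $p_{11}=t^{d_1-i}q_{11}^{i}+r_{11}^{i}$ and $p_{21}=t^{d_1-i}q_{21}^{i}+r_{21}^{i}$ with $\deg r_{11}^{i},\deg r_{21}^{i}<d_1-i$, so that
\begin{equation*}
p_{11}f_{21}-f_{11}p_{21}=\sum_{m}\alpha_{i_m}\bigl(p_{11}q_{21}^{i_m}-p_{21}q_{11}^{i_m}\bigr)=\sum_{m}\alpha_{i_m}\bigl(r_{11}^{i_m}q_{21}^{i_m}-r_{21}^{i_m}q_{11}^{i_m}\bigr),
\end{equation*}
a polynomial of degree at most $d_1-1$. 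Since $|\alpha_{i_m}|\le1$ by Proposition \ref{bigsummary}, the number $n$ of blocks is controlled, and each of $r_{11}^{i_m},q_{11}^{i_m},r_{21}^{i_m},q_{21}^{i_m}$ is a truncation of a polynomial with the sparse geometric shape of (\ref{polyform2}), this expression is $O(|t|^{d_1-1})$, hence $o(|t|^{d_1+d_2})$ as $d_2\to\infty$.

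The crux is $p_{12}(t)p_{21}(t)$. Let $i_0^{(12)},i_0^{(21)}$ be the minimal nontrivial overlap shifts recorded by $p_{12},p_{21}$, so $\deg p_{12}=d_2-i_0^{(12)}$, $\deg p_{21}=d_1-i_0^{(21)}$, and the term vanishes if either polynomial is $0$. The first step is an \emph{interaction inequality}: substituting the two defining overlap relations into one another forces a self-overlap of $w_1$ under a shift of length $i_0^{(12)}+i_0^{(21)}$, and symmetrically of $w_2$, so minimality of the fundamental period gives $i_0^{(12)}+i_0^{(21)}\ge\max(p_1,p_2)$ unless that sum already exceeds $d_1$ or $d_2$ (in which case $p_{12}p_{21}$ has degree far below $d_1+d_2$ trivially). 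This yields a large-period/small-period dichotomy parallel to the proof of Proposition \ref{pgrows}. When $\max(p_1,p_2)$ is large, $\deg(p_{12}p_{21})\le d_1+d_2-\max(p_1,p_2)$ is well below $d_1+d_2$ and $p_{11}p_{22}$ dominates outright. When both periods are small, one does not bound $p_{11}p_{22}$ and $p_{12}p_{21}$ separately (the Perron lower bound of Proposition \ref{pgrows} degenerates), but substitutes the decomposition $P=D(t)+E(t)$ of (\ref{polyform2})--(\ref{errorsmallp}) for all four polynomials and expands $D_{11}(t)D_{22}(t)-D_{12}(t)D_{21}(t)$, where $D_{ij}$ is the dominant part of $p_{ij}$: the leading monomials telescope, the surviving dominant term is checked to have modulus $\ge c|t|^{d_1+d_2}$ with $c>0$ independent of the periods, and the $E_{ij}$-contributions are absorbed using (\ref{errorlargep}) and (\ref{errorsmallp}).

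The main obstacle is this last step. Establishing the interaction inequality and treating the large-period regime are routine, but verifying that the cancellation in $D_{11}D_{22}-D_{12}D_{21}$ leaves a dominant coefficient bounded away from $0$ uniformly in $p_1,p_2$ requires carrying the four geometric series and the shifts $i_0^{(12)},i_0^{(21)}$ along simultaneously; the case to watch is $p_1,p_2$ both small --- say both equal to $2$, so $w_1,w_2$ are two-symbol-periodic --- where the computation is explicit but must be arranged so the constant does not collapse as $\rho\to1^{+}$.
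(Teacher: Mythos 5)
Your strategy for the determinant's ``dominant'' part $p_{11}p_{22}-p_{12}p_{21}$ is essentially the paper's (Proposition \ref{deltabound}): split into large/small period regimes, decompose each $p_{ij}=D_{ij}+E_{ij}$ in the small-period regime, and bound $D_{11}D_{22}-D_{12}D_{21}$ below as a whole rather than term-by-term. Your ``interaction inequality'' $i_0^{(12)}+i_0^{(21)}\geq\max(p_1,p_2)$ is a plausible observation but the paper does not need it --- it simply uses $i_1,i_2\geq 1$ to get $|t|^{-i_1-i_2}\leq\rho^{-2}$ and then shows the remaining factors are close to $1$ because $(1-\alpha_j)d_j$ is large; so you could drop that machinery.

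The genuine gap is in your treatment of the $f$-terms. After the correct telescoping identity
\[
p_{11}f_{21}-p_{21}f_{11}=\sum_{m}\alpha_{i_m}\bigl(r_{11}^{i_m}q_{21}^{i_m}-r_{21}^{i_m}q_{11}^{i_m}\bigr),
\]
you assert this is $O(|t|^{d_1-1})$ because ``the number $n$ of blocks is controlled.'' It is not. In Case (A) of Proposition \ref{bigsummary}, the number $n$ of summands is bounded only by something of order $\delta=h_2-d_2$, which can grow like $k$ (when $d_2$ is held at its ``sufficiently large'' threshold and $k\to\infty$). Each summand has modulus up to $\tfrac{1}{(\rho-1)^2}|t|^{d_1+1}$ by the crude coefficient bound, so the naive sum is $\lesssim n|t|^{d_1+1}$, and $n\rho^{1-d_2}$ does \emph{not} tend to $0$ uniformly over pairs of words with fixed large $d_2$. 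The paper's proof closes this exact hole with Corollary \ref{smalloverlap}, which exploits the simple-block structure of Case (A) to show that $r_{11}^{i}$ and $r_{21}^{i}$ have sharply reduced degree when $i$ is small relative to $\delta$; summing with those refined degree bounds gives $|p_{11}f_{21}-p_{21}f_{11}|\lesssim d_2|t|^{d_1+1}$, and $d_2\rho^{1-d_2}\to 0$ \emph{does} hold uniformly. Without that corollary or a substitute for it, your argument does not yield a constant $D$ independent of the word pair, so the estimate ``$O(|t|^{d_1-1})$'' is unsupported and the proof is incomplete.
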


The following is Proposition \ref{realdeltabound} in case $d_2=h_2$, and will be an important step toward proving the proposition in general.
\begin{prop}\label{deltabound}
	Let $\rho>1$.  There exists a positive constant $D_0$ such that $$|p_{11}(t)p_{22}(t)-p_{12}(t)p_{21}(t)|\geq D_0|t|^{d_1+d_2}$$ holds on $|t|\geq \rho$ for all pairs of words with $d_1$ and $d_2$ sufficiently large.
\end{prop}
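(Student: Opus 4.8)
The plan is to regard $p_{11}(t)p_{22}(t)$ as the dominant ``diagonal'' contribution and $p_{12}(t)p_{21}(t)$ as a perturbation to be controlled, splitting into cases according to how strongly $w_1$ and $w_2$ overlap. We work on $|t|\geq\rho$. Since we are in the case $d_2=h_2$, the polynomials $p_{11}$ and $p_{22}$ are the genuine correlation polynomials of $w_1$ and $w_2$, of degrees $d_1$ and $d_2$ with leading coefficient $1$, so Proposition~\ref{pgrows} gives a constant $D'>0$ with $|p_{11}(t)|\geq D'|t|^{d_1}$ and $|p_{22}(t)|\geq D'|t|^{d_2}$ once $d_1,d_2$ are large; also $\deg p_{12}\leq d_2-1$, $\deg p_{21}\leq d_1-1$, and both have coefficients in $\{0,1\}$, so $|p_{12}(t)|\leq\tfrac{\rho}{\rho-1}|t|^{d_2-1}$ and $|p_{21}(t)|\leq\tfrac{\rho}{\rho-1}|t|^{d_1-1}$. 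Let $s$ be the least $j\geq1$ with $c^{12}_{d_2-j}=1$ (i.e.\ $a_0\cdots a_{k-j}=b_j\cdots b_k$) and $u$ the least $i\geq1$ with $c^{21}_{d_1-i}=1$; these exist and are $\geq1$ in the cases below because $w_1\neq w_2$, which is forced once $d_2\geq1$. If $p_{12}\equiv0$ or $p_{21}\equiv0$ we are done with $D_0=D'^2$. If both are nonzero but $s+u\geq L$ for a constant $L=L(T,\rho)$ to be chosen, then $\deg(p_{12}p_{21})=(d_2-s)+(d_1-u)\leq d_1+d_2-L$, so $|p_{12}(t)p_{21}(t)|\leq(\tfrac{\rho}{\rho-1})^2\rho^{-L}|t|^{d_1+d_2}$; picking $L$ so large that $(\tfrac{\rho}{\rho-1})^2\rho^{-L}\leq\tfrac12 D'^2$ gives $|p_{11}p_{22}-p_{12}p_{21}|\geq\tfrac12 D'^2|t|^{d_1+d_2}$.

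The remaining (and main) case is that $p_{12},p_{21}$ are both nonzero and $q:=s+u<L$. Here the overlaps $a_0\cdots a_{k-s}=b_s\cdots b_k$ and $b_0\cdots b_{k-u}=a_u\cdots a_k$ combine to force $a_j=a_{j+q}$ and $b_j=b_{j+q}$ throughout their ranges, so $q$ is a period of each word; since $q<L<k+2-p_i$ for $k$ large, the divisibility property of short periods (established via Lemma~\ref{Bcopy} and the lemma following it) shows the fundamental periods $p_1,p_2$ both divide $q$, hence are $<L$. Now apply the decomposition~(\ref{polyform2}) to each of the four correlation polynomials: to $p_{11}$ with base word $w_1$, period $p_1$, offset $0$; to $p_{21}$ with base word $w_1$, period $p_1$, offset $u$; to $p_{22}$ with base word $w_2$, period $p_2$, offset $0$; and to $p_{12}$ with base word $w_2$, period $p_2$, offset $s$. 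This writes $p_{ij}=D_{ij}+\psi_{ij}$ with $D_{ij}$ a finite geometric sum (satisfying $|D_{ij}(t)|\leq\tfrac{\rho}{\rho-1}|t|^{\deg D_{ij}}$) and $\deg\psi_{ij}\leq\max(p_1,p_2)-3<L$, so all six products involving at least one $\psi$ are bounded by $\varepsilon(d_1,d_2)\,|t|^{d_1+d_2}$ with $\varepsilon\to0$ uniformly on $|t|\geq\rho$. Summing the geometric series and clearing denominators,
\[
D_{11}D_{22}-D_{12}D_{21}=\frac{t^{d_1+d_2}}{(1-t^{-p_1})(1-t^{-p_2})}\Bigl[(1-t^{-\mu_1})(1-t^{-\mu_2})-t^{-q}(1-t^{-\nu_1})(1-t^{-\nu_2})\Bigr],
\]
where $\mu_1>d_1$, $\mu_2>d_2$, $\nu_1>d_1-u$, $\nu_2>d_2-s$, so all four inner powers of $t$ tend to $0$ uniformly on $|t|\geq\rho$ as $d_1,d_2\to\infty$. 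Hence the bracket equals $(1-t^{-q})+o(1)$, and $|1-t^{-q}|\geq1-|t|^{-q}\geq1-\rho^{-q}\geq1-\rho^{-2}>0$ since $q\geq2$; together with $|(1-t^{-p_1})(1-t^{-p_2})|\leq4$ this yields $|D_{11}D_{22}-D_{12}D_{21}|\geq\tfrac{1-\rho^{-2}}{8}|t|^{d_1+d_2}$ for $d_1,d_2$ large, and absorbing the negligible $\psi$-terms gives $|p_{11}p_{22}-p_{12}p_{21}|\geq\tfrac{1-\rho^{-2}}{16}|t|^{d_1+d_2}$. Taking $D_0$ to be the smallest of the three constants finishes the proof.

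The main obstacle is this third case. A crude triangle inequality cannot work, since $p_{12}p_{21}$ is smaller than $p_{11}p_{22}$ only by a bounded factor (essentially $(\tfrac{\rho}{\rho-1})^2$). What rescues the estimate is that when $w_1$ and $w_2$ wrap tightly around one another, the cross term $D_{12}D_{21}$ is forced to reproduce $t^{-q}D_{11}D_{22}$ up to contributions whose exponents grow with $d_1,d_2$ and hence decay, so the difference collapses to $\tfrac{t^{d_1+d_2}}{(1-t^{-p_1})(1-t^{-p_2})}(1-t^{-q})$, and $1-t^{-q}$ is bounded away from $0$ on $|t|\geq\rho$ precisely because $\rho>1$. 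The only genuinely delicate bookkeeping is keeping the lengths of the various geometric series and the degrees of the error polynomials $\psi_{ij}$ controlled uniformly over all pairs of words, which is exactly what the uniform bound $\deg\psi\leq p-3$ (a consequence of $D\leq k-1$) from this section supplies.
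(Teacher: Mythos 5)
Your proof is correct, and it follows a genuinely different route from the paper's. The paper fixes cutoffs $\alpha_1, \alpha_2 \in (0,1)$ and runs a four-way case analysis on the fundamental periods $p_1, p_2$ relative to $d_1, d_2$, writing each $p_{ij}$ as a dominant geometric sum plus a small error and tracking the cancellation in $p_{11}p_{22} - p_{12}p_{21}$ through each of its cases. You instead pivot on $q = s + u$ (the paper's $i_1 + i_2$), the combined offset of the first nontrivial cross-overlaps, and choose a constant threshold $L$. When one of $p_{12}, p_{21}$ vanishes or $q \geq L$, the cross term $p_{12}p_{21}$ has degree at most $d_1 + d_2 - L$, so Proposition~\ref{pgrows} applied to $p_{11}$ and $p_{22}$ already wins by a triangle inequality with no cancellation analysis needed. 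The delicate content is then isolated to $q < L$, where the tight mutual overlap forces both words to have period $q$, hence $p_1, p_2 \leq q < L$ (you route this through Lemma~\ref{Bcopy} and its corollary, but the inequality $p_i \leq q$ is already immediate from the definition of fundamental period once $q < d_i$, and the divisibility is not needed); with both periods bounded by a constant, the small-period decomposition~(\ref{polyform2}) applies uniformly to all four polynomials and the main term collapses to a factor of $1 - t^{-q}$ plus contributions that vanish uniformly as $d_1, d_2 \to \infty$, and $|1 - t^{-q}| \geq 1 - \rho^{-2} > 0$ since $q \geq 2$. This localizes the cancellation estimate to exactly the tight-overlap regime, whereas the paper threads it through every one of its cases, including the large-period ones where the overlap is weak and a cruder argument suffices. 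Two small flags: the opening claim that ``we are in the case $d_2 = h_2$'' is unjustified --- the proposition is later invoked for $d_2 < h_2$ too --- but the argument survives because (\ref{polyform2}) and hence Proposition~\ref{pgrows} hold for any $D \leq k-1$, in particular for $D = d_2 < h_2$; and the stated bound $|p_{12}(t)| \leq \frac{\rho}{\rho-1}|t|^{d_2-1}$ goes the wrong way for $|t| > \rho$ (it should be $\frac{1}{\rho-1}|t|^{d_2}$, or, using the actual degree $d_2 - s$, $\frac{1}{\rho-1}|t|^{d_2-s+1}$), though your Case~2 computation in fact uses the sharper degree and arrives at the correct constant.
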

\begin{proof}
Let $p_1$ and $p_2$ denote the fundamental periods of $w_1$ and $w_2$, respectively.  Let $i_1$ denote the smallest positive integer $i$ satisfying (\ref{overlap2}), and let $i_2$ denote the analogous integer with the roles of $w_1$ and $w_2$ reversed.  We will arrive at our bound by dividing into subcases according to the size of $p_1$ and $p_2$ relative to $d_1$ and $d_2$.  In each case, we write the correlation polynomial $p_*(t)$ as a dominant term $D_*(t)$ plus an error term $E_*(t)$ as described above.  We then bound from \emph{below} the dominant contribution to $\Delta(t)$, namely $$D_{11}(t)D_{22}(t)-D_{12}(t)D_{21}(t)$$ and bound from \emph{above} the error contribution, which is
\begin{equation}\label{error}
E_{11}(t)D_{22}(t)+D_{11}(t)E_{22}(t)+E_{11}(t)E_{22}(t)-E_{12}(t)D_{21}(t)-D_{12}(t)E_{21}(t)-E_{12}(t)E_{21}(t).
\end{equation}
We proceed as in the one-word case by first fixing cutoffs $\alpha_1,\alpha_2\in(0,1)$ and using it to break into cases.

\bigskip
\noindent\framebox{Case $p_1\geq \alpha_1 d_1$ and $p_2\geq \alpha_2 d_2$}
\smallskip

\noindent This is the simplest case, as we may simply regard the leading term of the $p_*(t)$ in (\ref{polyform1}) as the dominant term and the rest as the error.  Thus $\Delta(t)$ is equal to $$t^{d_1+d_2} - t^{d_1+d_2-i_1-i_2}$$ plus an error term as in (\ref{error}).  Using (\ref{errorlargep}), we see that each term in the error is bounded by a constant multiple of either $$|t|^{(1-\alpha_1)d_1+d_2+1}\ \ \  \mbox{or}\ \ \  |t|^{d_1+(1-\alpha_2)d_2+1}\ \ \ \mbox{or}\ \ \ |t|^{(1-\alpha_1)d_1+(1-\alpha_2)d_2+2}$$  For the dominant term, note that $$|t^{d_1+d_2}-t^{d_1+d_2-i_1-i_2}|\geq |t|^{d_1+d_2}(1-|t|^{-i_1-i_2})\geq |t|^{d_1+d_2}(1-\rho^{-2})$$ since we take $|t|\geq \rho>1$.

Putting this together (and giving the constants generic names), we see that 
\begin{eqnarray*}
|\Delta(t)| &\geq& D_0'|t|^{d_1+d_2} - B_1|t|^{(1-\alpha_1)d_1+d_2+1} - B_2|t|^{d_1+(1-\alpha_2)d_2+1}-B_3|t|^{(1-\alpha_1)d_1+(1-\alpha_2)d_2+2} \\
&=& |t|^{d_1+d_2}\left( D_0' - B_1 |t|^{-\alpha_1 d_1+1}-B_2 |t|^{-\alpha_2 d_2+1} - B_3|t|^{-\alpha_1 d_1-\alpha_2 d_2+2}\right) \\ &\geq & |t|^{d_1+d_2}D_0
\end{eqnarray*}
for $d_1$ and $d_2$ sufficiently large, where $D_0$ can be taken to be any positive real number less than $D_0'$.

\bigskip
\noindent\framebox{Case $p_1\leq \alpha_1 d_1$ and $p_2\leq \alpha_2 d_2$ }
\smallskip

\noindent This case is the opposite extreme, wherein we use (\ref{polyform2}) with $\psi(t)$ as the error.  Let 
$$M_{11} = \left\lfloor \frac{d_1}{p_1}\right\rfloor \hspace{.5in}
M_{21} = \left\lfloor \frac{d_1-i_1}{p_1}\right\rfloor \hspace{.5in}
M_{12} = \left\lfloor \frac{d_2-i_2}{p_2}\right\rfloor \hspace{.5in}
M_{22} = \left\lfloor \frac{d_2}{p_2}\right\rfloor$$
We see that $\Delta(t)$ is equal to
\begin{eqnarray}\label{deltasmallp}
\lefteqn{
\frac{t^{d_1+d_2}(t^{p_1}-t^{-M_{11}p_1})(t^{p_2}-t^{-M_{22}p_2}) - t^{d_1+d_2-i_1-i_2}(t^{p_1}-t^{-M_{21}p_1})(t^{p_2}-t^{-M_{12}p_2})}{(t^{p_1}-1)(t^{p_2}-1)} }&& \\ &=&\nonumber
t^{d_1+d_2}\frac{(t^{p_1}-t^{-M_{11}p_1})(t^{p_2}-t^{-M_{22}p_2})}{(t^{p_1}-1)(t^{p_2}-1)}\left[1- t^{-i_1-i_2}\left(\frac{t^{p_1}-t^{-M_{21}p_1}}{t^{p_1}-t^{-M_{11}p_1}}\right) \left(\frac{t^{p_2}-t^{-M_{12}p_2}}{t^{p_2}-t^{-M_{22}p_2}} \right) \right]
\end{eqnarray}
plus the error term (\ref{error}), in which each term is bounded by a constant multiple of either $$|t|^{\alpha_1 d_1+d_2}\ \ \  \mbox{or}\ \ \  |t|^{d_1+\alpha_2 d_2}\ \ \ \mbox{or}\ \ \ |t|^{\alpha_1 d_1+\alpha_2 d_2}$$ by (\ref{errorsmallp}).

As for the main term, note that $$\left|\frac{t^{p_1}-t^{-M_{11}p_1}}{t^{p_1}-1}\right|\geq \frac{|t|^{p_1} - 1}{|t|^{p_1}+1}\geq \frac{\rho-1}{\rho+1}$$ and similarly $$\left|\frac{t^{p_2}-t^{-M_{22}p_2}}{t^{p_2}-1}\right|\geq \frac{\rho-1}{\rho+1},$$ so the term outside of the brackets has magnitude $$\geq |t|^{d_1+d_2}\left(\frac{\rho-1}{\rho+1}\right)^2.$$  To bound from below the quantity in brackets, we bound from above the subtracted term.  First note that 
 \begin{eqnarray*}
 \left|\frac{t^{p_1}-t^{-M_{21}p_1}}{t^{p_1}-t^{-M_{11}p_1}}\right| = \left|1+ \frac{1-t^{(M_{11}-M_{21})p_1}}{t^{M_{11}p_1+p_1}-1}\right| &\leq& 1+\frac{1+|t|^{i_1+p_1}}{|t|^{d_1}-1}\\ &= &1+\frac{|t|^{-d_1}+|t|^{i_1+p_1-d_1}}{1-|t|^{-d_1}} \\&\leq& 1+\frac{|t|^{i_1+p_1-d_1}+|t|^{i_1+p_1-d_1}}{1-\rho^{-1}} 
 \\ &=& 1+ \frac{2\rho}{\rho-1}|t|^{i_1+p_1-d_1}
 \end{eqnarray*}
  and similarly for the second factor.  Thus we have 
\begin{eqnarray*}
\lefteqn{\left| t^{-i_1-i_2}\left(\frac{t^{p_1}-t^{-M_{21}p_1}}{t^{p_1}-t^{-M_{11}p_1}}\right) \left(\frac{t^{p_2}-t^{-M_{12}p_2}}{t^{p_2}-t^{-M_{22}p_2}}\right)\right| } && \\ &\leq& 
|t|^{-i_1-i_2}\left(1+ \frac{2\rho}{\rho-1}(|t|^{i_1+p_1-d_1}+|t|^{i_2+p_2-d_2})+\left(\frac{2\rho}{\rho-1}\right)^2|t|^{i_1+i_2+p_1+p_2-d_1-d_2}\right) \\ &\leq & 
|t|^{-i_1-i_2}+\frac{2\rho}{\rho-1}(|t|^{-i_2-(1-\alpha_1)d_1}+|t|^{-i_1-(1-\alpha_2)d_2})+\left(\frac{2\rho}{\rho-1}\right)^2|t|^{-(1-\alpha_1)d_1-(1-\alpha_2)d_2}\\ &\leq & 
\rho^{-2}+\frac{2\rho}{\rho-1}(\rho^{-i_2-(1-\alpha_1)d_1}+\rho^{-i_1-(1-\alpha_2)d_2})+\left(\frac{2\rho}{\rho-1}\right)^2\rho^{-(1-\alpha_1)d_1-(1-\alpha_2)d_2}
\end{eqnarray*}
For $d_1$ and $d_2$ sufficiently large, the second and third summands can be made arbitrarily small, and in particular collectively smaller than $(1-\rho^{-2})/2$.  This entire expression is then $$\leq \rho^{-2}+\frac{1-\rho^{-2}}{2} = \frac{1+\rho^{-2}}{2}$$  Returning to (\ref{deltasmallp}), the bracketed term is then $$\geq 1-\frac{1+\rho^{-2}}{2} = \frac{1-\rho^{-2}}{2}$$ and we conclude the dominant contribution to $\Delta(t)$ is $$\geq |t|^{d_1+d_2}\left(\frac{\rho-1}{\rho+1}\right)^2\frac{1-\rho^{-2}}{2}$$ for $d_1,d_2$ sufficiently large. 

Arguing as in the end of the previous case, these estimates for the dominant and error contributions to $\Delta(t)$ imply that $|\Delta(t)|$ is at least a constant multiple of $|t|^{d_1+d_2}$ for $d_1$ and $d_2$ sufficiently large.

\bigskip
\noindent\framebox{Case  $p_1\geq \alpha_1 d_1$ and $p_2\leq \alpha_2 d_2$}
\smallskip

\noindent This is a hybrid case where we take the leading term in $p_{11}$ and $p_{21}$ as the dominant term and the rest as error, while for $p_{12}$ and $p_{22}$ we use (\ref{polyform2}) with $\psi(t)$ as the error.  We see that $\Delta(t)$ is equal to $$\frac{t^{d_1+d_2}(t^{p_2}-t^{-M_{22}p_2})-t^{d_1+d_2-i_1-i_2}(t^{p_2} - t^{-M_{12}p_2})}{t^{p_2}-1}$$ plus an error given by (\ref{error}), wherein each term is bounded by either $$|t|^{(1-\alpha_1)d_1+d_2+1}\ \ \ \mbox{or}\ \ \ |t|^{d_1+\alpha_2 d_2}\ \ \ \mbox{or}\ \ \ |t|^{(1-\alpha_1)d_1+\alpha_2 d_2+1}.$$  The dominant term can be rewritten as $$\frac{t^{d_1+d_2}(t^{p_2}-t^{-M_{22}p_2})}{t^{p_2}-1}\left[ 1 - t^{-i_1-i_2}\frac{t^{p_2}-t^{-M_{12}p_2}}{t^{p_2} - t^{-M_{22}p_2}}\right]$$ and we can proceed as in the previous part to see that the absolute value of this is $$\geq |t|^{d_1+d_2}\left(\frac{\rho-1}{\rho+1}\right)\frac{1-\rho^{-2}}{2}$$ for $d_1$ and $d_2$ sufficiently large, and again arrive at the desired lower bound on $|\Delta(t)|$.  

\bigskip
\noindent\framebox{Case $p_1\leq \alpha_1 d_1$ and $p_2\geq \alpha_2 d_2$ }
\smallskip

\noindent This case is handled precisely as the previous one, but with the roles of $w_1$ and $w_2$ reversed.
\end{proof}

In case $d_2<h_2$, we must contend with the polynomials $f_{11}$ and $f_{21}$.  Note that, for $*\in\{11,21\}$, we have $$p_*(t) = t^{d-i}q^i_*(t)+r^i_*(t)$$ where $$r^i_*(t) = c^*_{d-i-1}t^{d-i-1}+\cdots+c^*_0.$$  
We have
\begin{equation}\label{newdelta}
\Delta(t) = p_{11}(p_{22}-f_{21})-p_{21}(p_{12}- f_{11}) = (p_{11}p_{22}-p_{12}p_{21}) - (p_{11}f_{21} - p_{21}f_{11})
\end{equation}
wherein the first part of the last expression has been bounded below in Proposition \ref{deltabound}.  The second part is $$p_{11}f_{21} - p_{21}f_{11} =\sum_{m=1}^n \alpha_{i_m}(p_{11}q_{21}^{i_m}-p_{21}q_{11}^{i_m})$$ and 
$$p_{11}q_{21}^{i_m}-p_{21}q_{11}^{i_m} = (t^{d-i_m}q_{11}^{i_m}+r_{11}^{i_m})q_{21}^{i_m} - (t^{d-i_m}q_{21}^{i_m}+r_{21}^{i_m})q_{11}^{i_m} = 
r_{11}^{i_m}q_{21}^{i_m}-r_{21}^{i_m}q_{11}^{i_m}$$  

Note that $q_*^{i_m}$ has degree at most $i_m$ and $r_*^{i_m}$ has degree at most $d_1-i_m-1$, and each of these polynomials have coefficients in $\{0,1\}$.  This yields the simple bound $$|r_{11}^{i_m}q_{21}^{i_m}-r_{21}^{i_m}q_{11}^{i_m}|\leq |r_{11}^{i_m}q_{21}^{i_m}|+|r_{21}^{i_m}q_{11}^{i_m}|\leq \frac{2}{(\rho-1)^2}|t|^{d_1+1}$$  In Cases (B) and (C) of Proposition \ref{bigsummary} where we have at most two $i_m$ to contend with, this gives the bound $$|p_{11}f_{21}-p_{21}f_{11}|\leq \frac{4}{(\rho-1)^2}|t|^{d_1+1}$$

Case (A) presents a deeper challenge, wherein we will rely critically on Corollary \ref{smalloverlap}, which applies perhaps after switching the words $w_1,w_2$.  This corollary furnishes the bounds
\begin{eqnarray*}
|r_{11}^i| &\leq& \left\{\begin{array}{ll} \frac{1}{\rho-1}|t|^{d_1-\delta+1} & i+1<\delta \\  \frac{1}{\rho-1}|t|^{d_1-i}& i+1\geq \delta
\end{array}\right. \\
|r_{21}^i| &\leq& \left\{\begin{array}{ll} \frac{1}{\rho-1}|t|^{d_1-\delta+r} & i+1<\delta-r+1 \\  \frac{1}{\rho-1}|t|^{d_1-i}& i+1\geq \delta-r+1
\end{array}\right.
\end{eqnarray*}
Now we multiply by the simple bound $$|q_*^i|\leq \frac{1}{\rho-1}|t|^{i+1}$$ and sum over $i$.
We have
\begin{eqnarray*}
\sum_m |r_{11}^{i_m}q_{21}^{i_m}| &\leq& \sum_{i=0}^{\delta-2}\frac{1}{(\rho-1)^2}|t|^{d_1-\delta+2+i}+ \sum_{i=\delta-1}^{d_1-1}\frac{1}{(\rho-1)^2}|t|^{d_1+1}\\	&\leq & \frac{1}{(\rho-1)^3}|t|^{d_1+1}+\frac{d_1-\delta+1}{\rho-1}|t|^{d_1+1}
\end{eqnarray*}
Since $$d_1-\delta+1=d_1-(h_2-d_2)+1 \leq d_2+r$$ this expression is bounded above by a constant multiple of $d_2|t|^{d_1+1}$ for $d_2$ sufficnelty large.  The same reasoning applies to the sum of $|r_{21}^{i_m}q_{11}^{i_m}|$ to arrive at the same bound (with perhaps different constants).

In summary, we have in all cases $$|p_{11}f_{21}-p_{21}f_{11}|\leq Ed_2|t|^{d_1+1}$$ for $d_2$ sufficiently large and some constant $E$.  Combining this with Proposition \ref{deltabound}, we see that $$|\Delta(t)|\geq D_0|t|^{d_1+d_2}-Ed_2|t|^{d_1+1} = |t|^{d_1+d_2}(D_0-Ed_2|t|^{-d_2+1})\geq D|t|^{d_1+d_2}$$ for some constant $D$, as long as $d_1$ and $d_2$ are both sufficiently large.  This completes the proof of Proposition \ref{realdeltabound}.

\section{Two Words: Bounding the Perturbed Eigenvalue}\label{results}

Let $X(t)$ denote the characteristic polynomial of the perturbed subshift $\TkC$.  Applying Lemma \ref{detlemma} to (\ref{matrix}), we see 
\begin{equation}\label{pertcharpoly}
X(t) = \Delta(t)\chi_T(t)+M(t)
\end{equation}
 where $M(t)$ is a signed sum of minors of $T-t$ times correlation polynomials.  For example, if $d_2=h_2$ with $a_0\neq b_0$ and $a_{d_1+1}\neq a_{d_2+1}$, then $$M(t) = \pm p_{22}M_{a_{d_1+1};a_0}\pm p_{21}M_{b_{d_2+1};a_0}\pm p_{12}M_{a_{d_1+1};b_0}\pm p_{11}M_{b_{d_2+1};b_0}+M_{a_{d_1+1},b_{d_2+1};a_0,b_0}$$ where $M_{A;B}$ denotes the minor of $T-t$ obtained by deleting the $A$ rows and the $B$ columns.

\begin{lemm}\label{Mbound}
	Suppose that  $1<\rho<\lambda_0$.  There exists constants $M_0,M_1,M_2$ such that $$|M(t)|\leq M_0+M_1 |t|^{d_1}+M_2|t|^{d_2}$$ for all $t$ in the annulus $\rho\leq |t|\leq \lambda_0$.
\end{lemm}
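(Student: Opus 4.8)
The plan is to extract the exact shape of $M(t)$ from the determinant expansion and then bound each ingredient separately. By Lemma \ref{detlemma} applied to (\ref{matrix}), $M(t)$ is a finite signed sum of terms, each of which is a minor of $T-t$ --- obtained by deleting one or two rows and the same number of columns, and in the two-deletion case carrying a factor $\gamma$ --- multiplied by either $1$ or one of the four entries $-p_{11}$, $-p_{21}$, $-p_{12}+f_{11}$, $-p_{22}+f_{21}$ of the bottom-right block of (\ref{matrix}). The list displayed just after (\ref{pertcharpoly}) in the case $d_2=h_2$ is the prototype; in general the only differences are the presence of the $f$-polynomials and the extra bookkeeping forced by possible coincidences among $a_0$, $b_0$, $a_{d_1+1}$, $b_{d_2+1}$. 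Since there are only finitely many such minors and each is a polynomial in $t$, on the compact annulus $\rho\le|t|\le\lambda_0$ they are collectively bounded by a single constant depending only on $T$ and $\rho$, and the factors $\gamma\in\{0,1\}$ cost nothing. So the whole problem reduces to bounding the block entries.

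The correlation polynomials are handled as in Section \ref{oneword}: $p_{11}$ has degree $d_1$, $p_{21}$ degree at most $d_1-1$, and both have coefficients in $\{0,1\}$, so a geometric series gives $|p_{11}(t)|,|p_{21}(t)|\le\frac{\rho}{\rho-1}|t|^{d_1}$ for $|t|\ge\rho>1$; similarly $|p_{12}(t)|,|p_{22}(t)|\le\frac{\rho}{\rho-1}|t|^{d_2}$. Combined with the uniform bound on the minors, every summand of $M(t)$ not involving an $f$-polynomial is at most a constant times $1$, $|t|^{d_1}$, or $|t|^{d_2}$ on the annulus, and there are boundedly many of them.

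The hard part is $f_{11}$ and $f_{21}$, for which the naive estimate --- the number of summands times $\frac{\rho}{\rho-1}|t|^{d_1-1}$ --- is useless, since that number is of order $d_1$. When $d_2=h_2$ we have $f_{11}=f_{21}=0$, so there is nothing to do. In Cases (B) and (C) of Proposition \ref{bigsummary} there are at most two summands $\alpha_{i_m}q_*^{i_m}$, each of degree $<d_1$ with coefficients in $\{0,1\}$, so $|f_{11}(t)|,|f_{21}(t)|\le\frac{2\rho}{\rho-1}|t|^{d_1}$. In Case (A) --- possibly after interchanging $w_1$ and $w_2$, which only swaps $d_1$ and $d_2$ in the claimed bound and so does not change its form --- I would invoke Corollary \ref{smalloverlap}: the vanishing of $c^{11}_{d_1-i}$ for $0<i<\delta$ shows that each $q_{11}^{i_m}(t)$ is the monomial $t^{i_m}$ plus a tail of degree at most $i_m-\delta$, while the vanishing of $c^{21}_{d_1-i}$ for $i<\delta-r+1$ gives a comparable bound on the degree of $q_{21}^{i_m}(t)$. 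The $i_m$ are distinct and at most $d_1-1$, so the sum of the monomials $\alpha_{i_m}t^{i_m}$ is a $\{0,\pm1\}$-polynomial of degree $<d_1$, hence $\le\frac{\rho}{\rho-1}|t|^{d_1}$ in modulus; and the block structure $s_1=B^{n-1}B_n$ of Case (A) forces $n\le\delta+1$, so the tails contribute at most $(\delta+1)\frac{\rho}{\rho-1}|t|^{d_1-1-\delta}$, which is $\le C_\rho|t|^{d_1}$ because $(\delta+1)\rho^{-\delta}$ is bounded over $\delta\ge0$. The same computation bounds $f_{21}$ by a constant times $|t|^{d_1}$ as well, the only extra factor being a bounded power of $|t|$ (this is where Lemma \ref{hgrows} enters, via the statement of Corollary \ref{smalloverlap}). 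Hence $|f_{11}(t)|,|f_{21}(t)|\le(\mathrm{const})|t|^{d_1}$ in every case.

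Putting it together, each of the boundedly many summands of $M(t)$ is at most a constant times one of $1$, $|t|^{d_1}$, or $|t|^{d_2}$ on the whole annulus, and summing them yields $|M(t)|\le M_0+M_1|t|^{d_1}+M_2|t|^{d_2}$ with $M_0,M_1,M_2$ depending only on $T$ and $\rho$. The one real obstacle is the $f$-polynomial estimate in Case (A): that is the single place where the structural input of Corollary \ref{smalloverlap} is indispensable, precisely to remove the spurious factor of $d_1$ from the naive bound.
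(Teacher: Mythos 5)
Your proof is correct, but the route through Corollary~\ref{smalloverlap} is unnecessary. You claim the ``naive estimate'' for $f_*$ is ``useless'' because it would introduce a factor of order $d_1$, but you set up that naive estimate too crudely: you replaced each $|q_*^{i_m}(t)|$ by a common worst-case bound $\approx |t|^{d_1}/(\rho-1)$ and then multiplied by the number of summands. The paper instead bounds each summand by something that \emph{depends on} $i_m$, namely
$$|q_*^{i_m}(t)|\leq \frac{|t|^{i_m+1}}{\rho-1},$$
and then uses the fact that the $i_m$ are \emph{distinct} integers in $\{0,\dots,d_1-1\}$ to see
$$|f_*(t)|\leq\sum_m|q_*^{i_m}(t)|\leq\frac{|t|}{\rho-1}\bigl(1+|t|+\cdots+|t|^{d_1-1}\bigr)\leq\frac{|t|^{d_1+1}}{(\rho-1)^2}.$$
No factor of $d_1$ appears, and Corollary~\ref{smalloverlap}, the $\pm1$ normalization of the $\alpha_{i_m}$, and the case split from Proposition~\ref{bigsummary} are all unneeded. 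You actually already use the distinctness of the $i_m$ to sum the leading monomials $\alpha_{i_m}t^{i_m}$; the observation you missed is that the same device applies directly to the full $|q_*^{i_m}|$ once each is bounded by a geometric tail. Your version does arrive at the same conclusion, but it imports structural machinery (block decomposition, $n\le\delta+1$, the vanishing of certain $c^{*1}_{d_1-i}$) that the paper reserves for the genuinely delicate lower bound on $\Delta(t)$. Using it here is not an error, but it obscures how cheap this upper bound really is.
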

\begin{proof}
	Since the collection of minors of $T-t$ is finite, they are collectively bounded above by a single constant on the compact annulus $\rho\leq|t|\leq\lambda_0$.  It remains to bound the polynomials $p_*$ and $f_*$.  Looking merely at degree and coefficients gives the simple bounds $$|p_{11}(t)|, |p_{21}(t)|\leq \frac{|t|^{d_1+1}}{\rho-1}\ \ \ \ \ \mbox{and}\ \ \ \ \ |p_{12}(t)|, |p_{22}(t)|\leq \frac{|t|^{d_2+1}}{\rho-1}$$ as usual.  Similarly, we have $$|f_*|\leq \sum_m|q_*^{i_m}(t)|\leq\sum_m \frac{|t|^{i_m+1}}{\rho-1}\leq \frac{|t|}{\rho-1}\left(1+|t|+\cdots+|t|^{d_1-1}\right)\leq \frac{1}{(\rho-1)^2}|t|^{d_1+1}$$ for $*\in\{11,21\}$.  
\end{proof}
\begin{prop}\label{qual}
Suppose $\rho>1$ satisfies $|\lambda|<\rho<\lambda_0$ for all non-dominant eigenvalues $\lambda$ of $T$.	We have $$\lambda_1\geq\rho$$ for all pairs of admissible words $w_1,w_2$ of sufficient length.
\end{prop}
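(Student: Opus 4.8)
The plan is to adapt the proof of Proposition~\ref{onewordqualbound} to the two-word setting, using the factorization $X(t)=\Delta(t)\chi_T(t)+M(t)$ of~(\ref{pertcharpoly}) in place of the one-word identity~(\ref{charpolyrel}) and applying Rouch\'e's theorem on the circle $|t|=\rho$. First I would fix that circle: by the hypothesis on $\rho$, the polynomial $\chi_T$ has no zero on $|t|=\rho$, so it is bounded below in absolute value there by a constant $c>0$. Proposition~\ref{realdeltabound} then gives $|\Delta(t)|\geq D\rho^{d_1+d_2}$ on $|t|=\rho$, valid once $d_1$ and $d_2$ are sufficiently large, while Lemma~\ref{Mbound} gives $|M(t)|\leq M_0+M_1\rho^{d_1}+M_2\rho^{d_2}$ there. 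Since $\rho>1$, the product $\rho^{d_1+d_2}=\rho^{d_1}\rho^{d_2}$ eventually dwarfs $M_0+M_1\rho^{d_1}+M_2\rho^{d_2}$---after ordering the words so that $d_1\leq d_2$ one need only make $\rho^{d_1}$ exceed a constant depending on $T$ and $\rho$---so $|\Delta(t)\chi_T(t)|>|M(t)|$ on $|t|=\rho$ as soon as $d_1$ and $d_2$ are both large enough.

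Rouch\'e's theorem then shows $X(t)$ and $\Delta(t)\chi_T(t)$ have the same number of zeros in $|t|<\rho$. Now $\deg X=\dim\WkC=r+d_1+d_2$, and $\deg M<\deg X$ (each summand of $M$ is a minor of $T-t$, of degree at most $r-1$, times a polynomial---a correlation polynomial or one of the $f_*$---of degree at most $\max(d_1,d_2)$), so comparing degrees in~(\ref{pertcharpoly}) gives $\deg\Delta=d_1+d_2$. Hence $X$ and $\Delta\chi_T$ have the same degree, and so the same number of zeros with $|t|\geq\rho$ too. But $\Delta\chi_T$ vanishes at each of the $s\geq 1$ dominant eigenvalues of $T$, all of modulus $\lambda_0>\rho$; therefore $X$ has at least one zero $\mu$ with $|\mu|\geq\rho$. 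Since $\TkC$ has nonnegative entries, its Perron--Frobenius eigenvalue $\lambda_1$ is real and dominates every eigenvalue of $\TkC$ in absolute value (as recalled in Section~1), and $\mu$ is such an eigenvalue, so $\lambda_1\geq|\mu|\geq\rho$.

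The one remaining point---and what I expect to be the real obstacle---is to convert ``sufficient length'' of $w_1,w_2$ into ``$d_1$ and $d_2$ both large,'' so that Proposition~\ref{realdeltabound} and the estimate above may be applied. For $d_1$ this is immediate from Lemma~\ref{hgrows}: the hypothesis $\rho>1$ forces $\lambda_0>1$, so $G_T$ is not a single cycle and $d_1=h(w_1)\geq k-r$, which tends to infinity with the common length $k+1$. For $d_2$ the generic case $d_2=h(w_2)$ is handled in the same way; the case $d_2<h(w_2)$ is precisely where the structural analysis of Section~3 must enter---one would use Proposition~\ref{bigsummary}, together with Corollary~\ref{smalloverlap} after reordering the words if necessary, to rule out that the deficiency $\delta=h(w_2)-d_2$ grows as fast as $h(w_2)$, so that $d_2=h(w_2)-\delta\to\infty$ as well. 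Once $\min(d_1,d_2)\to\infty$ is in hand, the argument of the first two paragraphs applies verbatim, and Proposition~\ref{qual} follows.
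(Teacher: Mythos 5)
Your first two paragraphs follow the paper's argument closely and are correct: the Rouch\'e argument on $|t|=\rho$, combined with Proposition~\ref{realdeltabound} and Lemma~\ref{Mbound}, does yield $\lambda_1\geq\rho$ once $d_1$ and $d_2$ are both large. The third paragraph, however, contains a genuine gap. You propose to use Proposition~\ref{bigsummary} and Corollary~\ref{smalloverlap} to rule out that $\delta=h_2-d_2$ grows as fast as $h_2$, so as to conclude $d_2\to\infty$. This cannot work: the paper explicitly observes that ``we have no control over $d_2$,'' and indeed $d_2$ need not grow at all. If, say, $w_2$ agrees with $w_1$ in every position except the first (both admissible --- possible whenever $a_1$ has in-degree at least two), then $\eta w_2=\eta w_1$, so $[\eta w_2]\in W_k\langle w_1\rangle$ and $d_2=0$ for every $k$. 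Proposition~\ref{bigsummary} and Corollary~\ref{smalloverlap} constrain the \emph{form} of the dependency relation when $d_2<h_2$; they impose no lower bound on $d_2$.

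The paper's remedy is to turn small $d_2$ into an advantage rather than trying to rule it out. It splits according to whether $d_2\geq d_1/2$ or $d_2<d_1/2$. In the former case $d_2\to\infty$ with $k$ and your Rouch\'e argument applies verbatim. In the latter case the proof abandons the Rouch\'e argument and uses Proposition~\ref{bigsummary} differently: its proof establishes $b_{d_2+1}\cdots b_{h_2}=a_{1+d_1-\delta}\cdots a_{d_1}$, exhibiting a common subword of $w_1$ and $w_2$ of length $\delta=h_2-d_2\geq k-r-\frac{1}{2}(k-1)\to\infty$. Forbidding this common subword forbids both $w_1$ and $w_2$, so the perturbed subshift contains the one-word perturbation obtained by forbidding the common subword alone; by monotonicity of entropy under inclusion of subshifts, $\lambda_1$ is at least the Perron eigenvalue of that one-word perturbation, which is $\geq\rho$ by Proposition~\ref{onewordqualbound}. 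This reduction to the one-word result is the missing idea.
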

\begin{proof}
	By Lemma \ref{hgrows}, as $k\to \infty$ we have $d_1=h_1\to\infty$ as well.  By contrast, we have no control over $d_2$.  However, for large $d_2$ we can use Proposition \ref{realdeltabound}, while small $d_2$ ensures that $w_1$ and $w_2$ share a large common subword that we can exploit.

The hypothesis on $\rho$ implies that $\chi_T(t)$ is bounded below by a positive constant on $|t|=\rho$, so Lemma \ref{Mbound} and Proposition \ref{realdeltabound} imply that $$|M(t)|< |\Delta(t)\chi_T(t)|$$ for $d_1$ and $d_2$ sufficiently large.  Rouch\'e's Theorem implies that $X(t)$ and $\Delta(t)\chi_T(t)$ have the same number of roots in $|t|<\rho$ for such $d_1,d_2$.  Since these two polynomials have the same degree, it follows that they have the same number of roots with $|t|\geq \rho$, and in particular the largest root of $X(t)$ satisfies $\lambda_1\geq \rho$ if both $d_1$ and $d_2$ are sufficiently large.  

If $d_2\geq d_1/2$ and $k$ is sufficiently large, then the argument of the previous paragraph applies and yields the desired result.  On the other hand, if $d_2<d_1/2$, then by Proposition \ref{bigsummary}, $w_1$ and $w_2$ share a common subword of length $$\delta = h_2-d_2\geq k-r-\frac{1}{2}(k-1)=\frac{k}{2}-r+\frac{1}{2}\to\infty$$ as $k\to \infty$. Forbidding this common subword forbids both $w_1$ and $w_2$, so the desired inequality $\lambda_1\geq \rho$ follows from Proposition \ref{onewordqualbound} applied to the common subword.	
\end{proof}

Proposition \ref{qual} states in effect that the entropy of the perturbed shift can be made as close to that of the original shift by taking $k$ large enough.  As in the one-word situation, we can moreover bound the difference $|\lambda_1-\lambda_0|$.
\begin{theo}\label{mainB1}
Suppose that $\lambda_0>1$.  	There exist positive constants $C$ and $C'$ such that $$|\lambda_1-\lambda_0|\leq C\lambda_0^{-k}(1+C'\lambda_0^\delta)$$ for all pairs of admissible words $w_1,w_2$ with $d_1$ and $d_2$ sufficiently large.
\end{theo}
\begin{proof}
Choose $\rho>1$ with $|\lambda|<\rho<\lambda_0$ for all non-dominant eigenvalues $\lambda$ of $T$.
Write $\chi_T(t) = (t^s-\lambda_0^s)q(t)$ as in Section 1 and plug $t=\lambda_1$ into (\ref{pertcharpoly}) to conclude that 
	$$|\lambda_1^s-\lambda_0^s| = \frac{|M(\lambda_1)|}{|\Delta(\lambda_1)|\cdot |q(\lambda_1)|}$$  The hypotheses imply that $q(t)$ has no roots on $|t|\geq \rho$, and $|q(\lambda_1)|$ is therefore bounded below by a positive constant independent of $\lambda_1$.   By Lemma \ref{Mbound} and Proposition \ref{realdeltabound}, we have $$|M(t)|\leq M_0+M_1|t|^{d_1}+M_2|t|^{d_2}\leq M_1'|t|^{d_1}+M_2'|t|^{d_2}$$ and $$|\Delta(t)|\geq D|t|^{d_1+d_2}$$ for $d_1$ and $d_2$ sufficiently large.   Thus, since $d_1-d_2\leq (h_2-d_2)+(r-1)$, we have
	\begin{eqnarray*}
	|\lambda_1-\lambda_0|\leq |\lambda_1^s-\lambda_0^s|&\leq& A\lambda_1^{-d_1}+B\lambda_1^{-d_2} \\&=& A\lambda_1^{-d_1}(1+(B/A)\lambda_1^{d_1-d_2}) \\
		&\leq & A'\lambda_1^{-k}(1+(B/A)\lambda_1^{r-1}\lambda_1^{h_2-d_2}) \\ &\leq & A'\lambda_1^{-k}(1+C'\lambda_0^{h_2-d_2})
	\end{eqnarray*}
	since $\lambda_1\leq \lambda_0$ and $d_2\leq h_2$.    Now we proceed as in the proof of Proposition \ref{Lindmain} to replace $\lambda_1$ by $\lambda_0$ in this bound, which may increase the constant $A'$ somewhat to a constant $C$.
\end{proof}
The quantity $1+C'\lambda_0^\delta$ is a sort of correction factor that accounts for an overlap between our two words.  Note that, in Case (A) of Proposition \ref{bigsummary}, this factor is simply bounded above by a constant that can be effectively absorbed into $C$.   We can exploit the observation that the correction factor is controlled by the overlap in order to remove the growth restriction on $d_2$ in a similar fashion to the proof of Proposition \ref{qual}.   \begin{theo}\label{mainlast}
Suppose that $\lambda_0>1$.	There exists a positive constant $C$ such that  $$|\lambda_1-\lambda_0|\leq C\lambda_0^{-k/2}$$ for all pairs of sufficiently long admissible words $w_1,w_2$.
\end{theo}
\begin{proof}
	We consider two subcases given by $d_2\geq d_1/2$ and $d_2<d_1/2$.  In the former case, both $d_1$ and $d_2$ grow without bound as $k\to\infty$, so Theorem \ref{mainB1} applies and gives us the desired bound, since $$\delta=h_2-d_2\leq k-1-\frac{1}{2}(k-r)\leq \frac{k}{2}+\frac{r}{2}-1$$ so the correction factor is bounded by a constant multiple of $\lambda_0^{k/2}$.  In the latter case, there is a common subword of length $$\delta\geq k-r-\frac{d_1}{2}\geq \frac{k}{2}-r+\frac{1}{2}$$ and Theorem \ref{Lindmain} applied to this subword gives the desired bound for $k$ sufficiently large.
\end{proof}

\appendix

\section{A determinant lemma}

In this appendix, we state and prove a lemma that explains how to relate the characteristic polynomial of a perturbed subshift to that of the subshift itself.  Note that this section is self-contained and uses notation ({\it e.g.} $k$) independently of the rest of the paper.

Let $A$ and $B$ denote $n\times n$ and $m\times m$ matrices, respectively.  For $k=1,\dots,m$, let $1\leq i_k, j_k\leq n$ and consider the matrix
$$M = \left[\begin{array}{c|c} A & [\alpha_1\mathbf{e}_{i_1},\dots, \alpha_m\mathbf{e}_{i_m}] \\ \hline [\beta_1\mathbf{e}_{j_1},\dots, \beta_m\mathbf{e}_{j_m}]^T & B\end{array}\right]$$  Our goal in this section is to compute the determinant of $M$.

Let $S,T\subseteq\{1,\dots,m\}$ with $|S|=|T|$.  This determines a minor $B_{S,T}$ of $B$ obtained by deleting the rows indexed by $S$ and the columns indexed by $T$.  It also determines a minor of $A$ as follows.  Let 
\begin{eqnarray*}
J(S) &=& \{j_k\ |\ k\in S\} \\ I(T) &=& \{i_k\ |\ k\in T\}
\end{eqnarray*}
and let $A_{S,T}$ denote the minor of $A$ obtained by deleting rows indexed by $I(T)$ and the columns indexed by $J(S)$ provided that $|I(T)|=|J(T)| = |T|=|S|$, and set $A_{S,T}=0$ otherwise.  Finally, set $$\alpha_T = \prod_{k\in T}\alpha_{i_k}\ \ \ \ \ \mbox{and}\ \ \ \ \ \beta_S = \prod_{k\in S}\beta_{j_k}$$
\begin{lemm} \label{detlemma}
We have $$\det(M) = \sum_{S,T} \varepsilon_{S,T}\alpha_T\beta_S B_{S,T}A_{S,T}$$ where $\varepsilon_{S,T}\in\{\pm 1\}$ and the sum is over all pairs of subsets of $\{1,\dots,m\}$ of equal size.
\end{lemm}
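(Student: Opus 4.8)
The plan is to compute $\det(M)$ by expanding along its last $m$ columns using multilinearity of the determinant, and then to simplify each resulting term by a sequence of cofactor (Laplace) expansions along rows and columns that each carry a single nonzero entry. Write $\mathbf{f}_1,\dots,\mathbf{f}_{n+m}$ for the standard basis of $\C^{n+m}$. For $1\le k\le m$ the $(n+k)$-th column of $M$ equals $\alpha_k\mathbf{f}_{i_k}+\sum_{l=1}^{m}B_{lk}\mathbf{f}_{n+l}$, so multilinearity in these $m$ columns expresses $\det(M)$ as a sum of $(m+1)^m$ terms, one for each assignment to every $k\in\{1,\dots,m\}$ of either the token $\alpha_k\mathbf{f}_{i_k}$ or a token $B_{lk}\mathbf{f}_{n+l}$ with $l\in\{1,\dots,m\}$; the term equals the product of the chosen scalars times $\det$ of $M$ with its last $m$ columns replaced by the chosen basis vectors.

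First I would discard the vanishing terms: a term is zero unless the $m$ chosen basis vectors are pairwise distinct, which fails precisely when two columns pick $\mathbf{f}_{n+l}$ for the same $l$. Thus the surviving terms are indexed by the set $T\subseteq\{1,\dots,m\}$ of columns choosing an $\mathbf{f}_{i_k}$ token, together with an injection $\phi$ from $\overline{T}=\{1,\dots,m\}\setminus T$ into $\{1,\dots,m\}$; put $L=\phi(\overline{T})$ and $S=\{1,\dots,m\}\setminus L$, so $|S|=|T|$, and record that the scalar factor is $\bigl(\prod_{k\in T}\alpha_k\bigr)\bigl(\prod_{k\in\overline{T}}B_{\phi(k),k}\bigr)$. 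I would then dismantle the accompanying determinant in three stages. A Laplace expansion along the columns $\mathbf{f}_{n+l}$ ($l\in L$) removes those columns and the rows $n+l$ ($l\in L$), leaving a block matrix built from $A$, the $|T|$ unit columns $\mathbf{f}_{i_k}$ (supported in the top $n$ rows), the rows $\beta_l\mathbf{e}_{j_l}^T$ ($l\notin L$), and a zero block. A Laplace expansion along the columns $\mathbf{f}_{i_k}$ ($k\in T$) removes those columns and the rows $i_k$ ($k\in T$); it contributes zero unless $k\mapsto i_k$ is injective on $T$, that is $|I(T)|=|T|$, which is one of the conditions under which $A_{S,T}$ is allowed to be nonzero. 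A final Laplace expansion along the sparse rows $\beta_l\mathbf{e}_{j_l}^T$ ($l\notin L$) removes those rows and the columns $j_l$; it contributes zero unless $l\mapsto j_l$ is injective on $S$, that is $|J(S)|=|S|$, and leaves exactly the minor $A_{S,T}$ of $A$ together with the extra scalar $\prod_{l\in S}\beta_l$. Finally, fixing $T$ and $L$ (equivalently $S$) and summing over the bijections $\phi\colon\overline{T}\to L$, the products $\prod_{k\in\overline{T}}B_{\phi(k),k}$ run over the terms of the Leibniz expansion of the submatrix of $B$ on rows $L$ and columns $\overline{T}$, which is exactly $B_{S,T}$. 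Collecting everything, each pair $(S,T)$ with $|S|=|T|$ contributes $\pm$ the product of $A_{S,T}$, $B_{S,T}$, the $\alpha_k$ for $k\in T$, and the $\beta_k$ for $k\in S$, i.e. $\pm\,\alpha_T\beta_S B_{S,T}A_{S,T}$ in the notation of the statement, which is the desired identity.

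The step I expect to be the main obstacle is the sign bookkeeping: each of the three Laplace expansions introduces a sign depending on which row and column indices are removed, and the $\phi$-sum introduces the Leibniz signs of $B_{S,T}$, and one must check that for each fixed $(S,T)$ all of these combine into a single $\varepsilon_{S,T}\in\{\pm1\}$ independent of $\phi$. This is routine in view of the standard fact that the sign of a generalized Laplace expansion along a block of rows (or columns) depends only on the set of removed indices and not on the internal matching, but it needs to be written out with some care. If one wishes to avoid the index chasing, an alternative is to first assume $B$ invertible and apply the Schur-complement identity $\det(M)=\det(B)\,\det(A-UB^{-1}V)$ with $U=[\alpha_1\mathbf{e}_{i_1},\dots,\alpha_m\mathbf{e}_{i_m}]$ and $V=[\beta_1\mathbf{e}_{j_1},\dots,\beta_m\mathbf{e}_{j_m}]^T$, expand the low-rank correction $UB^{-1}V$ multilinearly (its $(a,b)$ entry is $\sum_{i_k=a,\ j_{k'}=b}\alpha_k\beta_{k'}(B^{-1})_{k,k'}$), and then drop the invertibility hypothesis since both sides are polynomial in the entries of $B$. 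I would keep the direct expansion as the primary argument, since it goes through verbatim over an arbitrary commutative ring.
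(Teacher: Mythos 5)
Your proof is correct and arrives at the same partition of the determinant by pairs $(S,T)$ of equal-size subsets as the paper, but by a genuinely different route. The paper works directly from the Leibniz formula $\det(M)=\sum_\sigma\mathrm{sgn}(\sigma)\prod_x M_{x,\sigma(x)}$ over permutations of the combined index set $\{a_1,\dots,a_n,b_1,\dots,b_m\}$: each $\sigma$ is assigned a unique pair $(S,T)$ of $b$-subsets, and the nonzero contributions with that $(S,T)$ are shown to factor as a fixed sign times $\alpha_T\beta_S A_{S,T}B_{S,T}$ by splitting $\sigma$ into a pair of permutations acting on the $A$- and $B$-blocks separately. You instead expand multilinearly in the last $m$ columns to index the surviving terms by $(T,\phi)$, peel off the accompanying determinant with three staged cofactor expansions, and then resum over bijections $\phi\colon\overline{T}\to L$ to reconstitute $B_{S,T}$. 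Both routes yield the constraints $|I(T)|=|T|$ and $|J(S)|=|S|$ in the same way, and both defer the final sign verification to the same standard parity facts about Laplace/Leibniz expansions; yours is somewhat more algorithmic and closer to a column-reduction computation, the paper's is more compact. One small imprecision: a term in the multilinear expansion can also vanish when two columns choose coinciding $\mathbf{f}_{i_k}$, so the phrase ``which fails precisely when two columns pick $\mathbf{f}_{n+l}$ for the same $l$'' should be weakened to ``in particular when,'' but since you catch the remaining degenerate case in the second Laplace step (via the injectivity requirement on $k\mapsto i_k$ over $T$), nothing breaks. Your Schur-complement remark is a reasonable sanity check, and you are right that the direct expansion is preferable since it is valid over any commutative ring.
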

\begin{proof}
For the purposes of this argument, we will re-index $A$ and $B$ disjointly by $\{a_1,\dots, a_n\}$ and $\{b_1,\dots, b_m\}$, respectively.  We have $$\det(M) = \sum_\sigma \mathrm{sgn}(\sigma)\prod_x M_{x,\sigma(x)}$$ where the sum is taken over all permutations of $\{a_1,\dots,a_n,b_1,\dots,b_m\}$.   Given $S,T\subseteq\{b_1,\dots, b_m\}$ of equal size, we can consider the collection of all such permutations satisfying $$\sigma(\{b_1,\dots,b_m\}\setminus S) = \{b_1,\dots,b_m\}\setminus T\ \ \ \ \ \mbox{and}\ \ \ \ \ \sigma(S)\cap \{b_1,\dots,b_m\}=\emptyset$$  For such $\sigma$ to give a nonzero contribution to $\det(M)$, we must have $\sigma(b_k)=a_{j_k}$ for $b_k\in S$.  In particular, this requires that $|J(S)|=|S|$.  Under such a $\sigma$, each $b_k\in T$ must be the image of some $a_i$.  In order to contribute nontrivially to the determinant, this $a_i$ must be $a_{i_k}$, and again we see $|I(T)|=|T|$.  The remaining elements of $\{a_1,\dots,a_n\}$ may be mapped to any element $\{a_1,\dots,a_n\}$ that is not among the $a_{j_k}$.  

Thus, to specify a one such $\sigma$ is precisely to specify a pair of bijections 
$$\{b_1,\dots,b_m\}\setminus S\to \{b_1,\dots, b_m\}\setminus T\ \ \ \ \ \mbox{and}\ \ \ \ \ \{a_1,\dots,a_n\}\setminus I(T)\to \{a_1,\dots,a_n\}\setminus J(S).$$
 On the other hand, if we {\it fix} a pair of identifications here for a particular pair $S$ and $T$,  then each such $\sigma$ corresponds to a pair of permutations - one for the index set of the submatrix of $A$ and one for that of $B$.  The sign of $\sigma$ is the product of the signs of these two permutations, up to a fixed sign that depends only on $S$, $T$, and the choice of identifications above.  Thus the net contribution to $\det(M)$ by such $\sigma$ is $\pm\alpha_T\beta_SB_{S,T}A_{S,T}$.  Now we need only note that {\it every} permutation $\sigma$ corresponds to a unique pair $S,T$, namely $$S = \{b_1,\dots,b_m\}\setminus\sigma^{-1}(\{b_1,\dots, b_m\})\ \ \ \ \ \mbox{and}\ \ \ \ \ T = \{b_1,\dots,b_m\}\setminus \sigma(\{b_1,\dots,b_m\}\setminus S)$$ and sum over pairs $S,T$.
\end{proof}


\end{document}